%% file: arXiv.tex
\documentclass[hidelinks,onefignum,onetabnum]{siamart251216}

\input{shared}

\usepackage{placeins}

\ifpdf
\hypersetup{
  pdftitle={A Gradient Recovery Method for Electron Magnetohydrodynamics with Fractional Dissipation},
  pdfauthor={H. Guo, R. Hu, Q. Peng, and X. Yang}
}
\fi

\providecommand{\bU}{\bm{U}}
\providecommand{\bV}{\bm{V}}
\providecommand{\bW}{\bm{W}}
\providecommand{\Ch}{C_h}
\providecommand{\Hh}{\mathcal{H}_h}
\providecommand{\Ph}{P_h^{\mathrm{div}}}
\providecommand{\ip}[2]{\left(#1,#2\right)_h}
\providecommand{\normh}[1]{\left\|#1\right\|_{0,h}}
\providecommand{\Dx}{D_x}
\providecommand{\Dy}{D_y}

\begin{document}

\maketitle

\begin{abstract}
We propose and analyze a structure-preserving numerical method for the $2\tfrac{1}{2}$-dimensional (2.5D) electron magnetohydrodynamics system with fractional dissipation on the periodic torus. The method works directly with the magnetic field components and combines this component formulation with the gradient recovery operator of [T. Chu, H. Guo, and Z. Zhang, SIAM J. Numer. Anal., 63 (2025), pp. 23--53]. We establish discrete energy stability for a semi-implicit structure-preserving formulation and use an explicit-Hall integrating-factor implementation for efficient computation on periodic grids. The fractional dissipation is treated exactly in Fourier space, and the in-plane divergence constraint is enforced by a spectral Hodge projection. Numerical experiments demonstrate second-order spatial convergence and stable Hall-driven dynamics across several benchmark tests.
\end{abstract}

\begin{keywords}
Electron magnetohydrodynamics, gradient recovery, fractional Laplacian, structure preserving, energy stability.
\end{keywords}

\begin{MSCcodes}
65M60, 65M12, 76W05, 76E25.
\end{MSCcodes}

\section{Introduction}\label{sec:intro}
The electron magnetohydrodynamics (EMHD) system
\begin{equation}\label{eq:emhd-full}
  \partial_t\bB + \curl\!\bigl((\curl\bB)\times\bB\bigr) = \nu\Delta\bB,
  \qquad \Div\bB = 0,
\end{equation}
describes the evolution of a magnetic field $\bB$ in a plasma regime where electron motion dominates and ions provide a neutralizing background.  The model is widely used to study rapid magnetic reconnection and electron-scale magnetic dynamics in space and laboratory plasmas. In particular, the Hall effect allows electron motion to decouple from ion motion below the ion inertial length, leading to reconnection rates that can become largely independent of resistivity \cite{Kingsep90,Biskamp00,ShayDrake99,RogersDenton01}.

The nonlinear {Hall term} $\curl((\curl\bB)\times\bB)$ in \eqref{eq:emhd-full} contains one more spatial derivative than the nonlinear terms in standard MHD. This higher-order structure underlies many distinctive features of EMHD and makes the model considerably more challenging to analyze and discretize. On the physics side, the Hall term supports dispersive whistler waves, promotes open X-point structures in reconnection regions, and can generate elongated current sheets depending on the dominant dissipation mechanism \cite{ChaconSimakovZocco07,KnollChacon06,DaughtonsScudder06}. On the mathematical side, the non-resistive system remains poorly understood: local well-posedness in general settings is open, while recent works have shown ill-posedness mechanisms and well-posedness results under special structures or partial dissipation \cite{DaiBabaei25,JeongOh22,JeongOh24a}.

\noindent {\it The $2.5$D setting and its physical relevance.} We focus on the {$2\tfrac{1}{2}$-dimensional} ($2.5$D) setting, in which the magnetic field has three components $(B_x,B_y,B_z)$ but depends only on the two spatial variables $(x,y)$. This is the minimal nontrivial geometry for EMHD: it retains the essential Hall coupling between the in-plane field and the out-of-plane component while avoiding the full complexity of a fully three-dimensional computation. In particular, it preserves current-sheet and magnetic-island structures arising in reconnection problems \cite{KnollChacon06,ChaconSimakovZocco07,FinnKaw77,BiskampWelter80}. This reduction is therefore not merely a simplifying assumption: the Hall term still retains the derivative structure responsible for the main analytical and numerical difficulties, while the computational cost remains far below that of a fully three-dimensional simulation. Moreover, the fractional-dissipation model studied here lies in a regime where available well-posedness theory applies \cite{ChaeWanWu15}. Thus the $2.5$D system provides a natural testbed for the structure-preserving scheme developed below. 

\noindent {\it Component formulation.}
Rather than using scalar flux-function variables, we formulate the method directly in terms of the magnetic field components $(B_x,B_y,B_z)$. Under the 2.5D assumption, the curl is $\curl\bB = (\partial_y B_z,\,-\partial_x B_z,\,J)$ with $J = \partial_x B_y - \partial_y B_x$ the out-of-plane current. Expanding the Hall term component by component yields the closed system
\begin{subequations}\label{eq:emhd}
\begin{align}
  \partial_t B_x + \partial_y(\bB\cdot\nabla B_z) + \Lal B_x &= 0,
  \label{eq:emhd-x}\\
  \partial_t B_y - \partial_x(\bB\cdot\nabla B_z) + \Lal B_y &= 0,
  \label{eq:emhd-y}\\
  \partial_t B_z + \bB\cdot\nabla J + \Lal B_z &= 0,
  \label{eq:emhd-z}
\end{align}
\end{subequations}
with $\Div\bB = 0$, where $\Lal = (-\Delta)^{\alpha/2}$ is the fractional Laplacian with $\alpha\in(1,2]$. {This is the system discretized in the rest of the paper.}

The in-plane pair $(B_x,B_y)$ is driven by gradients of $B_z$; the out-of-plane component $B_z$ is advected by the in-plane current $J$. Local well-posedness of \eqref{eq:emhd} follows from Chae--Wan--Wu \cite{ChaeWanWu15}, whose analysis requires fractional dissipation of order $\alpha>1$;\footnote{In our convention $\Lal=(-\Delta)^{\alpha/2}$, equivalently $(-\Delta)^{\beta}$ with $\beta>1/2$.} our range $\alpha\in(1,2]$ lies in this regime.

\noindent {\it Relation to the scalar formulation.} The same 2.5D dynamics can also be written in scalar variables by setting $\bB = \curl(a\bm{e}_z) + b\bm{e}_z$, where $a$ is the flux function ($B_x=\partial_y a$, $B_y=-\partial_x a$) and $b=B_z$, giving 
\begin{subequations}\label{eq:dai} 
\begin{align} 
\partial_t a + a_y b_x - a_x b_y &= \mu\Delta a, \label{eq:dai-a}\\ 
\partial_t b - a_y\Delta a_x + a_x\Delta a_y &= \nu\Delta b, \label{eq:dai-b}
\end{align} 
\end{subequations} 
with current $J=-\Delta a$ and resistivities $\mu,\nu\ge0$. This form is convenient for PDE analysis, since the in-plane divergence constraint is built in and the Hall cancellation that underpins energy estimates is transparent. Local well-posedness under partial resistivity ($\mu>0$ or $\nu>0$) is due to Dai--Babaei \cite{DaiBabaei25,DaiWu23}; see \cite{DaiGlobal23,DaiIllposed24,DaiBlowup25,JeongOh22,JeongOh24a,JeongOh24b,hu2025well,HuPengYang26} for the broader literature. For low-order finite element discretization, however, \eqref{eq:dai} is less convenient than the component formulation: the Hall term in \eqref{eq:dai-b} is \emph{third order} in the flux function, through $a_x\Delta a_y - a_y\Delta a_x = \bB_{xy}\cdot\nabla J$. This leads to a parabolic time-step restriction $\tau\lesssim h^2/|\nabla a|$, far more severe than the hyperbolic $\tau\lesssim h/|\bB|$ associated with the component system, and compounds the error of approximating $J_h$ and then differentiating it again. In \eqref{eq:emhd}, by contrast, $J_h$ is computed directly from first derivatives of $(B_{x,h},B_{y,h})$, and the remaining Hall structure involves only second-order discrete operators. These considerations motivate our choice of the component formulation \eqref{eq:emhd} as the primary computational representation.

\noindent {\it Gradient recovery methods.} Gradient recovery is a classical post-processing technique in finite element analysis: from a $C^0$ finite element solution, whose gradient is generally discontinuous across element interfaces, one reconstructs a smoother and often superconvergent gradient. Two classical constructions are the superconvergent patch recovery (SPR) \cite{ZienkiewiczZhu92} and the polynomial-preserving recovery (PPR) \cite{ZhangNaga05}; recovered gradients also yield asymptotically exact a posteriori error estimators~\cite{NagaZhang04}. Recovery techniques have further been used in $C^0$ linear discretizations of fourth-order problems~\cite{GuoZhangZou18}, high-frequency wave propagation~\cite{GuoYang17}, elliptic interface problems~\cite{GuoYangInterfaceI,GuoYangInterfaceII,GuoYangInterfaceIII}, and the spectral computation of topological edge modes in photonic graphene~\cite{GuoYangZhu19}. In the present work, we employ the recovery operator \cite{CGZ25} for a different purpose: not as an error estimator, but as a constructive device that makes the second-order discrete Hall operators computable on $C^0$ linear elements, for which the required second derivatives are otherwise unavailable.

\noindent {\it Numerical approach and contributions.} The key insight is that the component formulation \eqref{eq:emhd} reduces the Hall derivative count from three to two, and the gradient recovery operator $G_h$ of Chu, Guo, and Zhang \cite{CGZ25} then provides an accurate and stable way to approximate the remaining second-order Hall structure using $C^0$ linear elements.  A semi-implicit treatment of the Hall term with an exact integrating-factor method for the fractional Laplacian gives unconditional diffusion stability, leaving only the hyperbolic CFL $\tau\lesssim h/|\bB|$ from the explicit Hall evaluation.

The present work combines the 2.5D component formulation with the gradient recovery framework \cite{CGZ25}, leading to a practical discretization based on $C^0$ linear elements and an efficient Fourier realization on periodic grids. For the analysis, we introduce a semi-implicit structure-preserving formulation and establish discrete energy stability estimates. Numerical experiments demonstrate second-order spatial convergence and illustrate stable Hall-driven dynamics across several benchmark problems.

The remainder of the paper is organized as follows. Section~\ref{sec:alg} presents the spatial discretization, the recovery gradient operator, the discrete Hall nonlinearities, and the time discretization. Section~\ref{sec:stability} establishes the discrete energy stability theory, including Hall cancellation and unconditional $L^2$ stability, and discusses a formal higher-order $H_h^s$ estimate. Section~\ref{sec:impl} describes the Fourier realization of $G_h$, the integrating-factor treatment of $\Lambda^\alpha$, and the spectral Hodge projection. Section~\ref{sec:numerics} presents the numerical experiments. Appendix~\ref{app:bdf2} introduces the BDF2 modified-energy stability theorem and its numerical verification.

\section{Algorithm Description}\label{sec:alg}
This section develops the spatial and temporal discretization of the component system~\eqref{eq:emhd}.  We first introduce a $C^0$ linear finite element space together with the gradient recovery operator that makes the second-order discrete Hall terms computable, then assemble the associated discrete Hall and current operators, and finally present the semi-implicit time discretization.  The construction keeps the lowest-order Hall cancellation exact at the discrete level, which is the structural property exploited in the stability analysis of Section~\ref{sec:stability}.

\subsection{Spatial discretization and the recovery operator}
Let $\Th$ be a shape-regular triangulation of $\Omega = [0,2\pi]^2$ with mesh size $h$.  Define the $C^0$ linear finite element space
\[
  \Vh = \{v_h\in\mathcal{P}_1(\Omega;\Th) : v_h\text{ continuous at all }
         p\in\mathcal{N}_h\},
\]
where $\mathcal{N}_h$ is the set of nodes (vertices) of $\Th$.   $V_h^{\mathrm{per}}$ denotes the subset of $\Vh$ with periodic boundary condition. For each $v_h\in\Vh$, let $\Eh$ denote the sets of all edges, and let $\me$ and $\Ke$ denote the midpoint and the union of the two triangles sharing edge $e$. Associate a Morley finite element function $w_h\in\Mh$ by
\begin{equation}\label{eq:morley-bridge}
  w_h(p) = v_h(p)\;\forall\,p\in\mathcal{N}_h,
  \qquad
  \frac{\partial w_h}{\partial\bfn_e}(\me) = \bfn_e\cdot\frac{1}{|\Ke|}\int_{\Ke}\nabla v_h\,dx \;\forall\,e\in\Eh,
\end{equation}
where $\bfn_e$ is the unit normal to $e$. 

The {gradient recovery operator} $\Gh:\Vh\to\mathcal{P}_1(\Omega;\Th)^2$ is defined by 
\[
\Gh v_h|_T = \nabla w_h|_T, \quad \text{for all} \quad T\in\Th.
\] 
We introduce $\Gh$ here on a general shape-regular triangulation; on the uniform periodic grid used in our computations it admits an explicit Fourier realization, derived in Section~\ref{sec:impl}. The key property of $\Gh$ is captured in the following lemma.

\begin{lemma}[Lemma~3.3 of \cite{CGZ25}]\label{lem:gh-cr}
  For any $v_h\in\Vh$, $\Gh v_h\in\VCR\times\VCR$, and
  \begin{equation}\label{eq:gh-formula}
    \Gh v_h(\me) = \frac{1}{|\Ke|}\int_{\Ke}\nabla v_h\,dx \qquad\forall\,e\in\Eh.
  \end{equation}
\end{lemma}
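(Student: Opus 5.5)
The plan is to use the standard degrees of freedom and structure of the Morley element. Recall that $\Mh$ consists of piecewise quadratics $w_h$ that are continuous at the vertices $\mathcal{N}_h$ and whose normal derivatives $\partial w_h/\partial\bfn_e$ are continuous at the edge midpoints $\me$. The local degrees of freedom on a triangle $T$ are the three vertex values and the three midpoint normal derivatives. Hence \eqref{eq:morley-bridge} defines a unique $w_h\in\Mh$ for each $v_h\in\Vh$. Since $w_h|_T$ is quadratic, $\nabla w_h|_T\in\mathcal{P}_1(T)^2$, so $\Gh v_h$ is piecewise linear. Two things remain to show: continuity of $\Gh v_h$ at each $\me$ (which gives $\Gh v_h\in\VCR\times\VCR$), and identification of its value there with the patch average \eqref{eq:gh-formula}.

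\textbf{Step 1: the tangential component.} Fix an interior edge $e$ with endpoints $p_1,p_2$ and unit tangent $\bm{t}_e$. On either adjacent triangle $T$, the restriction $w_h|_e$ is a quadratic in the arclength variable. For a quadratic, the derivative at the midpoint equals the difference quotient of the endpoint values. This gives
\[
\bm{t}_e\cdot\nabla w_h|_T(\me)=\frac{w_h(p_2)-w_h(p_1)}{|e|}=\frac{v_h(p_2)-v_h(p_1)}{|e|}=\bm{t}_e\cdot\nabla v_h|_T .
\]
The tangential derivative of a $C^0$ linear function is single valued along $e$, so this value does not depend on $T$. Moreover, $\nabla v_h|_{T_1}$ and $\nabla v_h|_{T_2}$ share the same tangential component, so this common value also equals $\bm{t}_e\cdot\frac{1}{|\Ke|}\int_{\Ke}\nabla v_h\,dx$.

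\textbf{Step 2: the normal component.} By \eqref{eq:morley-bridge}, $\bfn_e\cdot\nabla w_h|_T(\me)$ equals $\bfn_e\cdot\frac{1}{|\Ke|}\int_{\Ke}\nabla v_h\,dx$ from both sides, by the Morley continuity of normal derivatives at midpoints. Combining Steps 1 and 2, both components of $\nabla w_h|_{T_1}(\me)$ and $\nabla w_h|_{T_2}(\me)$ coincide and equal the patch average. This proves continuity at $\me$ and formula \eqref{eq:gh-formula} simultaneously. In the periodic setting every edge is interior, so no boundary edges need separate treatment.

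\textbf{Main obstacle.} The main obstacle is the midpoint identity in Step 1. It requires checking that a Morley quadratic restricted to an edge is determined consistently by the vertex values. For the midpoint derivative this holds exactly, because for any quadratic $q$ on $[0,1]$ one has $q'(1/2)=q(1)-q(0)$, independent of the unknown curvature. A secondary point is the well-posedness of the Morley interpolation in \eqref{eq:morley-bridge}, i.e.\ unisolvence of the six local degrees of freedom; this is classical and can be cited.
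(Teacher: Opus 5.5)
Your proof is correct and is essentially the standard argument behind Lemma~3.3 of \cite{CGZ25}, which the paper cites without proof. You split $\nabla w_h|_T(\me)$ into a tangential part and a normal part. The tangential part is fixed by the shared vertex values through the quadratic identity $q'(1/2)=q(1)-q(0)$, equivalently midpoint-rule exactness for the linear function $\partial_{\bm{t}_e}w_h|_e$. The normal part is fixed by the Morley degree of freedom in \eqref{eq:morley-bridge}. One small addition: the lemma is stated for general $\Vh$, and your two steps apply verbatim to a boundary edge, with $\Ke$ taken as the single adjacent triangle.
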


This formula makes $\Gh$ cheap to compute (one patch average per edge) and avoids assembling Morley basis functions in practice. The stability equivalence $\norm{\nabla v_h}{L^2(\Omega)}\lesssim\norm{\Gh v_h}{L^2(\Omega)}$ (Lemma~3.7 of \cite{CGZ25}) and the discrete Poincar\'e inequality $\norm{\nabla v_h}{L^2}\lesssim\norm{D\Gh v_h}{L^2(\Th)}$ hold on shape-regular meshes without uniformity assumptions.

Figure~\ref{fig:recovery} illustrates the two geometric objects underlying the definition of $\Gh$. Panel~(a) shows the diamond-shaped patch $\Ke = T_1\cup T_2$ formed by the two triangles sharing interior edge $e$: the midpoint $\me$ lies on $e$, the outward unit normal $\bfn_e$ points from $T_2$ into $T_1$, and the patch-average formula \eqref{eq:gh-formula} assigns to $\me$ the area-weighted mean of $\nabla v_h$ over the patch. Panel~(b) shows a single triangle $T$ with edge midpoints $m_{e_1}, m_{e_2}, m_{e_3}$ (squares); $\Gh v_h$ is a piecewise-linear vector field determined by its values at these midpoints, which is precisely the definition of the Crouzeix--Raviart space $\VCR\times\VCR$.

\begin{figure}[h]
  \centering
  \includegraphics[width=0.92\textwidth]{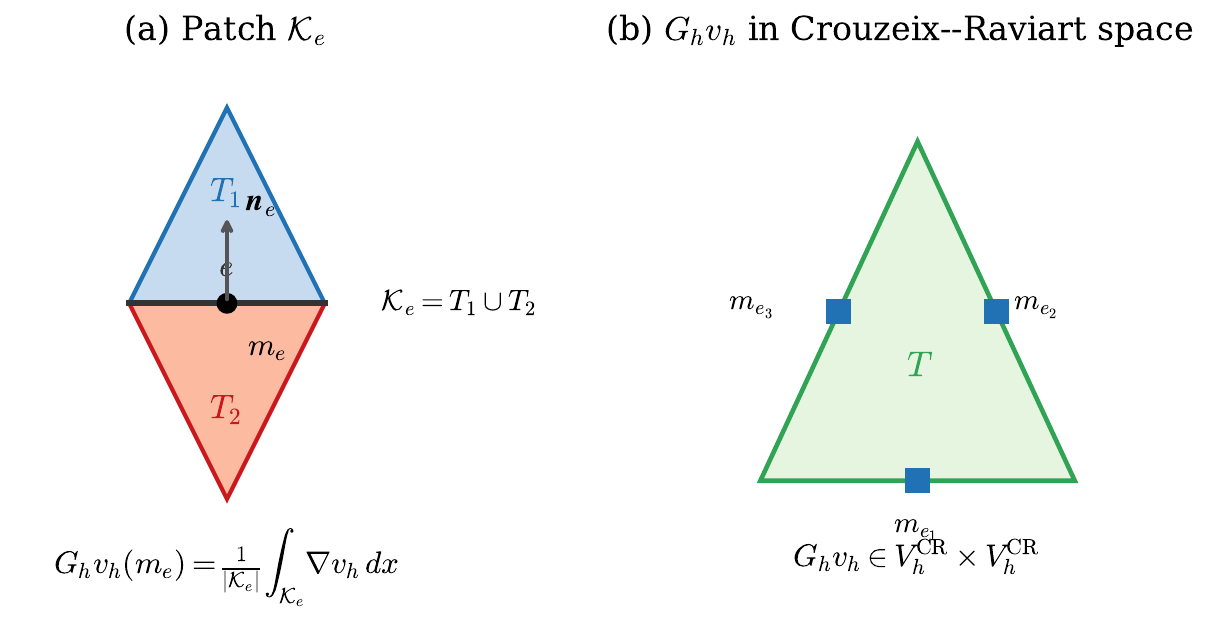}
  \caption{Recovery gradient operator $\Gh$. \emph{(a)} Patch $\Ke = T_1\cup T_2$ around interior edge $e$; the midpoint value $\Gh v_h(m_e)$ is the area-weighted average of $\nabla v_h$ over the patch (equation~\eqref{eq:gh-formula}). \emph{(b)} The recovered gradient $\Gh v_h$ evaluated at each edge midpoint (squares) belongs to the Crouzeix--Raviart space $\VCR\times\VCR$, making it a natural discrete analogue of the continuous gradient in the Sobolev space $H^1$.}
  \label{fig:recovery}
\end{figure}

\subsection{Discrete operators}
In the $2.5$D setting the discrete solution is $\bB_h = (B_{x,h},\,B_{y,h},\,B_{z,h})\in(V_h^{\mathrm{per}})^3$. The in-plane pair $(B_{x,h}, B_{y,h})$ satisfies the divergence-free constraint $\Div_{xy}\bB_h = 0$, while $B_{z,h}$ is a free scalar field.

\paragraph{1. Discrete in-plane curl}
\[
  J_h\big|_T
  \;=\;
  \bigl(\Gh B_{y,h}\cdot\bm{e}_x - \Gh B_{x,h}\cdot\bm{e}_y\bigr)\big|_T,
  \qquad T\in\Th,
\]
giving a piecewise-constant scalar $J_h = \partial_{x,h}B_{y,h} - \partial_{y,h}B_{x,h}$ (the $z$-component of $\curl_h\bB_h$).

\paragraph{2. Discrete Hall nonlinearity}
Expanding $\curl\bigl((\curl\bB)\times\bB\bigr)$ component-by-component under $\partial_z = 0$ separates the three equations as follows.
\begin{itemize}
  \item \emph{In-plane} ($i = x,y$):  the Hall force is
    $\pm\partial_i\bigl(\bB\cdot\nabla B_z\bigr)$, treated semi-implicitly:
    \begin{equation}\label{eq:hall-inplane}
      \begin{aligned}
      \mathcal{N}_h^{xy}[\bB_h^n, B_{z,h}^{n+1}]_x
        &=  \partial_{y,h}\bigl(\bB_h^n\cdot\Gh B_{z,h}^{n+1}\bigr),\\
      \mathcal{N}_h^{xy}[\bB_h^n, B_{z,h}^{n+1}]_y
        &= -\partial_{x,h}\bigl(\bB_h^n\cdot\Gh B_{z,h}^{n+1}\bigr).
      \end{aligned}
    \end{equation}
  \item \emph{Out-of-plane} ($i = z$): the Hall force is $\bB\cdot\nabla J$,
    treated semi-implicitly:
    \begin{equation}\label{eq:hall-outofplane}
      \mathcal{N}_h^{z}[\bB_h^n, \bB_h^{n+1}] = \bB_h^n\cdot\Gh J_h^{n+1},
      \qquad
      J_h^{n+1} = \partial_{x,h}B_{y,h}^{n+1} - \partial_{y,h}B_{x,h}^{n+1}.
    \end{equation}
\end{itemize}
Treating one factor of $\bB$ explicitly linearizes each Hall term and reduces the problem to three coupled second-order linear systems in $\bB_h^{n+1}$.

\paragraph{3. Discrete fractional Laplacian}
On the uniform periodic grid the fractional Laplacian $\Lal = (-\Delta)^{\alpha/2}$ is applied {exactly} in Fourier space (e.g., \cite{TangWangYuanZhou20}):
\begin{equation}\label{eq:frac-spec}
  \widehat{(\Lal f_h)}_{k_x,k_y} = |k_x^2 + k_y^2|^{\alpha/2}\,\hat{f}_{k_x,k_y}.
\end{equation}
This is exact for any $\alpha > 0$ and avoids the $O(h^2)$ stencil error of the standard five-point approximation.

\paragraph{4. Edge-penalty bilinear form}
To control jumps of the recovered gradient across edges, define (following \cite[Eq.~(4.4)]{CGZ25})
\begin{equation}\label{eq:bh}
  b_h(u_h,v_h) = \sum_{e\in\Eh} h_e^{-1}\bigl(\jump{\Gh u_h},\jump{\Gh v_h}\bigr)_{L^2(e)},
\end{equation}
where $\jump{\cdot}$ denotes the jump across $e$.

\begin{remark}
In contrast to the stabilization term in \cite{CGZ25}, the boundary penalty term is omitted here because periodic boundary conditions allow every edge to be treated as an interior edge.
\end{remark}

\subsection{Time discretization}
Given $\bB_h^n\in(V_h^{\mathrm{per}})^3$, find $\bB_h^{n+1}\in(V_h^{\mathrm{per}})^3$ such that for all $(\phi_x,\phi_y,\phi_z)\in(V_h^{\mathrm{per}})^3$:
\begin{align}
  \frac{1}{\tau}\bigl(B_{x,h}^{n+1}-B_{x,h}^n,\phi_x\bigr)
  &+\bigl(\mathcal{N}_h^{xy}[\bB_h^n,B_{z,h}^{n+1}]_x,\phi_x\bigr)
   +\bigl(\Lambda_h^\alpha B_{x,h}^{n+1},\phi_x\bigr)
   +\sigma\,b_h(B_{x,h}^{n+1},\phi_x) = 0,\label{eq:scheme-x}\\
  \frac{1}{\tau}\bigl(B_{y,h}^{n+1}-B_{y,h}^n,\phi_y\bigr)
  &+\bigl(\mathcal{N}_h^{xy}[\bB_h^n,B_{z,h}^{n+1}]_y,\phi_y\bigr)
   +\bigl(\Lambda_h^\alpha B_{y,h}^{n+1},\phi_y\bigr)
   +\sigma\,b_h(B_{y,h}^{n+1},\phi_y) = 0,\label{eq:scheme-y}\\
  \frac{1}{\tau}\bigl(B_{z,h}^{n+1}-B_{z,h}^n,\phi_z\bigr)
  &+\bigl(\mathcal{N}_h^{z}[\bB_h^n,\bB_h^{n+1}],\phi_z\bigr)
   +\bigl(\Lambda_h^\alpha B_{z,h}^{n+1},\phi_z\bigr)
   +\sigma\,b_h(B_{z,h}^{n+1},\phi_z) = 0,\label{eq:scheme-z}
\end{align}
followed by the in-plane divergence-free projection (e.g., \cite{Rossmanith06})
\begin{equation*}
  \bigl(B_{x,h}^{n+1},B_{y,h}^{n+1}\bigr)\leftarrow P_h^{\mathrm{div}}\bigl(B_{x,h}^{n+1},B_{y,h}^{n+1}\bigr).
\end{equation*}
The semi-implicit formulation leads to coupled linear subproblems at each step: the in-plane pair $(B_x, B_y)$ is driven by $\nabla B_z^{n+1}$ through \eqref{eq:scheme-x}--\eqref{eq:scheme-y}, and $B_z$ is driven by $J^{n+1}$ of $(B_x^{n+1}, B_y^{n+1})$ through \eqref{eq:scheme-z}. In the implementation of Section~\ref{sec:impl}, we use an integrating-factor variant in which all Hall forces are evaluated explicitly at time level $n$, replacing the coupled linear solve by three independent Fourier updates. The semi-implicit coupled system \eqref{eq:scheme-x}--\eqref{eq:scheme-z} is the structure-preserving model underlying the stability analysis of Section~\ref{sec:stability}; it is not solved directly in our experiments, which instead use the explicit-Hall integrating-factor implementation of Section~\ref{sec:impl}.

Figure~\ref{fig:flowchart} summarizes the time-stepping loop, combining the recovery-based Hall evaluation, the integrating-factor update, and the spectral Hodge projection.

\begin{figure}[htbp]
\centering
\begin{tikzpicture}[
  startstop/.style={
    rectangle, rounded corners=6pt,
    draw=black!70, fill=gray!15,
    text width=6.2cm, align=center,
    minimum height=0.9cm, font=\small
  },
  process/.style={
    rectangle,
    draw=black!70, fill=blue!8,
    text width=6.2cm, align=center,
    minimum height=0.9cm, font=\small
  },
  decision/.style={
    diamond, aspect=3.2,
    draw=black!70, fill=orange!12,
    text width=3.6cm, align=center,
    inner sep=1pt, font=\small
  },
  arrow/.style={-{Stealth[length=5pt]}, thick},
  node distance=0.55cm
]

\node (init)   [startstop]
  {Initialise $\bB_h^0$;\enspace
   project $(B_{x,h}^0,B_{y,h}^0)$ onto $\Div_{xy}\bB_h=0$};

\node (hall)   [process, below=of init]
  {\textbf{Step 1:} Compute Hall forces
   $H_x^n, H_y^n, H_z^n$
   via recovery gradients $G_h$
   applied to $\bB_h^n$};

\node (frac)   [process, below=of hall]
  {\textbf{Step 2:} Apply integrating-factor update
   $\hat{B}^{n+1}_{\bm{k}}
    = e^{-|\bm{k}|^\alpha\tau}
      \bigl(\hat{B}^n_{\bm{k}}
            - \tau\,\hat{H}^n_{\bm{k}}\bigr)$};

\node (proj)   [process, below=of frac]
  {\textbf{Step 3:} Spectral Hodge projection
   $(B_{x,h}^{n+1},B_{y,h}^{n+1})
    \leftarrow P_h^{\mathrm{div}}(B_{x,h}^{n+1},B_{y,h}^{n+1})$};

\node (diag)   [process, below=of proj]
  {\textbf{Step 4:} Record diagnostics
   $\|\bB_h^{n+1}\|^2$,\enspace
   $\max|J_h^{n+1}|$,\enspace
   $\max|\Div_{xy}\bB_h^{n+1}|$};

\node (check)  [decision, below=of diag]
  {$t^{n+1} < T$?};

\node (output) [startstop, below=0.9cm of check]
  {Output: $\{\bB_h^n\}_{n=0}^N$,\enspace
   energy and current histories};

\draw[arrow] (init)   -- (hall);
\draw[arrow] (hall)   -- (frac);
\draw[arrow] (frac)   -- (proj);
\draw[arrow] (proj)   -- (diag);
\draw[arrow] (diag)   -- (check);
\draw[arrow] (check)  -- node[right,font=\small]{No} (output);

\draw[arrow] (check.east)
    -- node[above,font=\small]{Yes: $n\leftarrow n+1$}
    ++(2.4,0)
    |- (hall.east);

\end{tikzpicture}
\caption{Time-stepping loop for the integrating-factor EMHD scheme. Steps~1--3 are pure Fourier operations and cost $O(N^2\log N)$ per step. The Hodge projection in Step~3 maintains $\max|\Div_{xy}\bB_h|$ near round-off ($10^{-16}$--$10^{-13}$, scaling with the field amplitude) throughout.}
\label{fig:flowchart}
\end{figure}

\section{Stability analysis}\label{sec:stability}
This section presents the energy mechanism behind the scheme.  The argument follows the dyadic energy method of \cite{ChaeWanWu15}, adapted to the periodic discrete operators used here.  The essential point is that the Hall term must be discretized in a form that preserves the skew-symmetry of \(\curl((\curl\bB)\times\bB)\).  In the continuum, several componentwise forms are equivalent by the product rule and by \(\Div\bB=0\).  At the discrete level these identities are no longer automatic.  For the stability proof we therefore use the curl--cross-product form below.

Let \(\ip{\cdot}{\cdot}\) denote the periodic grid inner product and assume that \(\Dx,\Dy\) are periodic summation-by-parts operators,
\begin{equation}\label{eq:sbp}
  \ip{\Dx f}{g}=-\ip{f}{\Dx g}, \qquad \ip{\Dy f}{g}=-\ip{f}{\Dy g}.
\end{equation}
On the uniform periodic grid this is satisfied by the Fourier realization of \(\Gh\), whose symbols are purely imaginary and odd.  Define the discrete \(2.5\)D curl by
\begin{equation}\label{eq:disc-curl}
  \Ch\bB_h := \bigl(\Dy B_{z,h},\,-\Dx B_{z,h},\, \Dx B_{y,h}-\Dy B_{x,h}\bigr).
\end{equation}
Then \eqref{eq:sbp} implies the discrete curl identity
\begin{equation}\label{eq:curl-self-adjoint}
  \ip{\Ch\bU_h}{\bV_h}=\ip{\bU_h}{\Ch\bV_h},
\end{equation}
which is the periodic analogue of integration by parts for the curl operator.

For \(\bU_h,\bV_h\in(V_h^{\mathrm{per}})^3\), define the linearized Hall operator
\begin{equation}\label{eq:stable-hall-def}
  \Hh(\bU_h;\bV_h) := \Ch\bigl((\Ch\bU_h)\times\bV_h\bigr),
\end{equation}
where \(\bU_h\) is the implicit argument and \(\bV_h\) is the frozen explicit field.  This is the discrete counterpart of \(\curl((\curl\bB)\times\bB)\).

\begin{remark}\label{rem:component-conservative}
The component formula \(H_x=\Dy(\bB_{xy}\cdot\nabla_h B_z)\), \(H_y=-\Dx(\bB_{xy}\cdot\nabla_h B_z)\), \(H_z=\bB_{xy}\cdot\nabla_h J\) is equivalent to the continuum Hall term only after using the product rule and \(\Div\bB_{xy}=0\).  If this reduced component form is kept in the code, the out-of-plane term should be interpreted in conservative weak form,
\begin{equation}\label{eq:conservative-Hz}
  \begin{aligned}
  \ip{H_{z,h}(\bU_h;\bV_h)}{\phi_h} &= -\ip{J_h(\bU_h)}{V_{x,h}\Dx\phi_h+V_{y,h}\Dy\phi_h},\\
  J_h(\bU_h)&=\Dx U_{y,h}-\Dy U_{x,h},
  \end{aligned}
\end{equation}
so that the lowest-order Hall cancellation is exact.  The high-order Chae--Wan--Wu estimate is cleanest for the curl--cross-product form \eqref{eq:stable-hall-def}, and the theorem below is stated for that form.
\end{remark}

Let \(L_{b,h}\) be the symmetric non-negative operator induced by the edge penalty, namely, $  \ip{L_{b,h}u_h}{v_h}=b_h(u_h,v_h).$  The structure-preserving semi-implicit step is
\begin{equation}\label{eq:stable-scheme}
  \frac{\bB_h^{n+1,*}-\bB_h^n}{\tau} +\Hh(\bB_h^{n+1,*};\bB_h^n) +\Lambda_h^\alpha\bB_h^{n+1,*} +\sigma L_{b,h}\bB_h^{n+1,*} =0,
\end{equation}
followed by the spectral Hodge projection
\begin{align}\label{eq:stable-projection}
  \bB_h^{n+1}&=\Ph\bB_h^{n+1,*},\\
\label{eq:projection-contract}
  \normh{\Ph\bU_h}&\le \normh{\bU_h}.
\end{align}
The projection is an orthogonal projection in the grid \(L^2\) inner product; therefore

\begin{lemma}[Discrete Hall cancellation]\label{lem:disc-hall-cancel}
For every \(\bU_h,\bV_h\in(V_h^{\mathrm{per}})^3\),
\begin{equation}\label{eq:hall-cancel}
  \ip{\Hh(\bU_h;\bV_h)}{\bU_h}=0.
\end{equation}
\end{lemma}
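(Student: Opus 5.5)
The plan is to move one discrete curl across the inner product with the self-adjointness identity \eqref{eq:curl-self-adjoint}, and then use the fact that a cross product is orthogonal to each of its factors. Set $\bW_h := (\Ch\bU_h)\times\bV_h$, computed pointwise at the grid nodes; this is what the product of grid functions in \eqref{eq:stable-hall-def} means. By definition $\Hh(\bU_h;\bV_h)=\Ch\bW_h$. Applying \eqref{eq:curl-self-adjoint} with $\bW_h$ in the first slot gives
\begin{equation*}
  \ip{\Hh(\bU_h;\bV_h)}{\bU_h}=\ip{\Ch\bW_h}{\bU_h}=\ip{\bW_h}{\Ch\bU_h}
  =\ip{(\Ch\bU_h)\times\bV_h}{\Ch\bU_h}.
\end{equation*}

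The periodic grid inner product is a (weighted) sum over nodes of the pointwise Euclidean dot products of the three-component vectors. At every node the summand is $\bigl((\Ch\bU_h)\times\bV_h\bigr)\cdot\Ch\bU_h$. This vanishes identically, because the scalar triple product $(\bm{a}\times\bm{b})\cdot\bm{a}$ is zero for all $\bm{a},\bm{b}\in\mathbb{R}^3$. Summing over the nodes gives \eqref{eq:hall-cancel}.

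The one point to check carefully is that \eqref{eq:curl-self-adjoint} really follows from the summation-by-parts relations \eqref{eq:sbp}. I would verify it componentwise. Using \eqref{eq:disc-curl},
\begin{equation*}
  \ip{\Ch\bU_h}{\bV_h}=\ip{\Dy U_z}{V_x}-\ip{\Dx U_z}{V_y}+\ip{\Dx U_y}{V_z}-\ip{\Dy U_x}{V_z}.
\end{equation*}
Applying \eqref{eq:sbp} to each term turns this into
\begin{equation*}
  -\ip{U_z}{\Dy V_x}+\ip{U_z}{\Dx V_y}-\ip{U_y}{\Dx V_z}+\ip{U_x}{\Dy V_z},
\end{equation*}
which is exactly $\ip{\bU_h}{\Ch\bV_h}$ after regrouping.

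This identity must also be available for a general grid vector field like $\bW_h$. That field is not necessarily in $(V_h^{\mathrm{per}})^3$, since it is a nodal product. Because \eqref{eq:sbp} is stated for arbitrary periodic grid functions, with $D_x,D_y$ acting on any grid data through their Fourier symbols, I expect no real obstacle here. The only subtlety is interpreting the product in $\Hh$ nodewise, so that the pointwise triple-product cancellation matches the grid inner product. No divergence-free condition on $\bV_h$ or $\bU_h$ is needed.
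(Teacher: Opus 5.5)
Your proof is correct and is the paper's argument: move one $\Ch$ across the inner product using \eqref{eq:curl-self-adjoint}, then use that $\bigl((\Ch\bU_h)\times\bV_h\bigr)\cdot\Ch\bU_h$ vanishes at every node. Your componentwise derivation of \eqref{eq:curl-self-adjoint} from \eqref{eq:sbp} checks out. So does your remark that the identity must hold for the nodal product $\bW_h$, which need not lie in $(V_h^{\mathrm{per}})^3$; both fill in details the paper leaves implicit.
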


\begin{proof}
By \eqref{eq:curl-self-adjoint} and \eqref{eq:stable-hall-def},
\begin{align*}
  \ip{\Hh(\bU_h;\bV_h)}{\bU_h}
  &=
  \ip{\Ch\bigl((\Ch\bU_h)\times\bV_h\bigr)}{\bU_h}  \\
  &=
  \ip{(\Ch\bU_h)\times\bV_h}{\Ch\bU_h}.
\end{align*}
The last inner product vanishes pointwise because \(((\Ch\bU_h)\times\bV_h)\cdot\Ch\bU_h=0\).  This proves the claim.
\end{proof}

\begin{theorem}[Discrete \(L^2\) stability]\label{thm:L2-stability}
Assume that \eqref{eq:sbp} holds, \(\Lambda_h^\alpha\) is the non-negative Fourier multiplier with symbol \(|\bm{k}|^\alpha\), and \(\sigma\ge0\).  Then one step of \eqref{eq:stable-scheme}--\eqref{eq:stable-projection} satisfies
\begin{multline}\label{eq:L2-energy-identity}
\normh{\bB_h^{n+1,*}}^2 + \normh{\bB_h^{n+1,*}-\bB_h^n}^2 +2\tau\norm{\Lambda_h^{\alpha/2}\bB_h^{n+1,*}}{0,h}^2 \\  +2\tau\sigma\sum_{i=x,y,z}b_h(B_{i,h}^{n+1,*},B_{i,h}^{n+1,*}) = \normh{\bB_h^n}^2.
\end{multline}
Consequently,
\begin{multline}\label{eq:L2-stability-final}
\normh{\bB_h^{n+1}}^2+2\tau\norm{\Lambda_h^{\alpha/2}\bB_h^{n+1,*}}{0,h}^2+2\tau\sigma\sum_{i=x,y,z}b_h(B_{i,h}^{n+1,*},B_{i,h}^{n+1,*})\\
\le\normh{\bB_h^n}^2.
\end{multline}
Thus the semi-implicit curl-form scheme is unconditionally stable in the magnetic \(L^2\) energy.
\end{theorem}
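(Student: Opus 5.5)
The plan is the standard energy argument: test the semi-implicit step \eqref{eq:stable-scheme} against $2\tau\bB_h^{n+1,*}$ in the grid inner product $\ip{\cdot}{\cdot}$, then use the contraction property \eqref{eq:projection-contract} of the Hodge projection.

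First, multiply \eqref{eq:stable-scheme} by $\tau$ and take the inner product with $2\bB_h^{n+1,*}$. The time-difference term is handled by the polarization identity
\[
2\ip{\bB_h^{n+1,*}-\bB_h^n}{\bB_h^{n+1,*}}=\normh{\bB_h^{n+1,*}}^2-\normh{\bB_h^n}^2+\normh{\bB_h^{n+1,*}-\bB_h^n}^2 .
\]
The Hall term $2\tau\ip{\Hh(\bB_h^{n+1,*};\bB_h^n)}{\bB_h^{n+1,*}}$ vanishes by Lemma~\ref{lem:disc-hall-cancel}, applied with $\bU_h=\bB_h^{n+1,*}$ and $\bV_h=\bB_h^n$. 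This is the only place where the structure of the nonlinearity enters. It works because the implicit argument of $\Hh$ is exactly the test function; that is why the curl--cross-product form \eqref{eq:stable-hall-def} is used rather than the reduced component form.

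Next come the dissipative terms. Since $\Lambda_h^\alpha$ is a Fourier multiplier with the real, non-negative symbol $|\bm{k}|^\alpha$, Parseval on the periodic grid gives
\[
\ip{\Lambda_h^\alpha\bU_h}{\bU_h}=\sum_{\bm{k}}|\bm{k}|^\alpha|\hat U_{\bm{k}}|^2=\normh{\Lambda_h^{\alpha/2}\bU_h}^2 .
\]
Here $\Lambda_h^{\alpha/2}$ is the multiplier with symbol $|\bm{k}|^{\alpha/2}$, and the identity holds componentwise. The penalty term equals $\sigma\ip{L_{b,h}\bB_h^{n+1,*}}{\bB_h^{n+1,*}}=\sigma\sum_{i}b_h(B_{i,h}^{n+1,*},B_{i,h}^{n+1,*})$, by the definition of $L_{b,h}$ acting componentwise. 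Collecting these terms and rearranging gives the identity \eqref{eq:L2-energy-identity}.

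For \eqref{eq:L2-stability-final}, I will drop the non-negative term $\normh{\bB_h^{n+1,*}-\bB_h^n}^2$ and use $\normh{\bB_h^{n+1}}=\normh{\Ph\bB_h^{n+1,*}}\le\normh{\bB_h^{n+1,*}}$ from \eqref{eq:projection-contract}. The dissipation terms stay evaluated at the pre-projection field $\bB_h^{n+1,*}$, so no commutation of $\Ph$ with $\Lambda_h^\alpha$ or $b_h$ is needed. Unconditional stability follows because no step uses a restriction on $\tau$.

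I do not expect a serious obstacle. The one point that needs care is consistency of inner products. The cancellation lemma and the summation-by-parts property \eqref{eq:sbp} are stated in $\ip{\cdot}{\cdot}$, so the scheme must be read in that grid inner product rather than the finite element $L^2$ product. Parseval must hold for the same inner product, which is true for the nodal grid product up to the normalization constant. I would also note briefly that the linear system \eqref{eq:stable-scheme} is uniquely solvable: applying the same computation to its homogeneous version shows that its kernel is trivial.
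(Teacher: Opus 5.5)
Your proposal is correct and follows the paper's proof essentially step for step: test \eqref{eq:stable-scheme} with $\bB_h^{n+1,*}$, apply the polarization identity, cancel the Hall term by Lemma~\ref{lem:disc-hall-cancel}, use the Parseval identity for $\Lambda_h^\alpha$ and the non-negativity of the penalty, and finish with the contraction \eqref{eq:projection-contract}. Your extra remarks also hold: the scheme must be read in the grid inner product, and the linear step is uniquely solvable because the same energy computation shows its kernel is trivial; the paper leaves both points implicit.
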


\begin{proof}
Take the grid inner product of \eqref{eq:stable-scheme} with \(\bB_h^{n+1,*}\).  The time-difference term gives
\begin{equation*}
  2\ip{\bB_h^{n+1,*}-\bB_h^n}{\bB_h^{n+1,*}}  = \normh{\bB_h^{n+1,*}}^2 -\normh{\bB_h^n}^2 +\normh{\bB_h^{n+1,*}-\bB_h^n}^2.
\end{equation*}
The Hall term vanishes by Lemma~\ref{lem:disc-hall-cancel}.  The fractional Laplacian contributes
\begin{equation*}
  \ip{\Lambda_h^\alpha\bB_h^{n+1,*}}{\bB_h^{n+1,*}}=\norm{\Lambda_h^{\alpha/2}\bB_h^{n+1,*}}{0,h}^2,
\end{equation*}
and the penalty contributes the non-negative quantity in \eqref{eq:L2-energy-identity}.  This proves \eqref{eq:L2-energy-identity}. The projected estimate \eqref{eq:L2-stability-final} follows from \eqref{eq:projection-contract}.
\end{proof}

We next state the higher-order estimate.  Let \(\Delta_q^h\), \(q\ge -1\), be the periodic discrete Littlewood--Paley blocks, truncated at the Nyquist frequency.  For \(s\in\mathbb{R}\), define
\begin{equation}\label{eq:Hs-discrete}
  \norm{\bU_h}{H_h^s}^2 := \sum_{q\ge -1}2^{2sq}\normh{\Delta_q^h\bU_h}^2.
\end{equation}
The blocks commute with \(\Ch\), \(\Lambda_h^\alpha\), and \(\Ph\) on the uniform periodic grid.

\begin{lemma}[Discrete Chae--Wan--Wu commutator estimate]
\label{lem:cww-commutator}
Let \(s>2\).  There exists a constant \(C_s\), independent of \(h\), such that for all periodic grid functions \(\bU_h,\bV_h\),
\begin{align}\label{eq:cww-commutator}
  \sum_{q\ge -1}2^{2sq} \left| \ip{\Delta_q^h\Hh(\bU_h;\bV_h)}{\Delta_q^h\bU_h} \right|  \le C_s\norm{\nabla_h\bV_h}{\ell^\infty} \norm{\bU_h}{H_h^{s+1/2}}^2.
\end{align}
\end{lemma}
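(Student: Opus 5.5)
The plan is to reduce the left-hand side of \eqref{eq:cww-commutator} to a sum of commutators, using Lemma~\ref{lem:disc-hall-cancel} at the level of each dyadic block.

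\textbf{Step 1: move the blocks inside the curl.} Since $\Delta_q^h$ commutes with $\Ch$ and \eqref{eq:curl-self-adjoint} holds, we have
\[
\ip{\Delta_q^h\Hh(\bU_h;\bV_h)}{\Delta_q^h\bU_h}=\ip{\Delta_q^h\bigl((\Ch\bU_h)\times\bV_h\bigr)}{\Ch\Delta_q^h\bU_h}.
\]

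\textbf{Step 2: subtract the cancelling term.} Write $\bW_q:=\Ch\Delta_q^h\bU_h$. Lemma~\ref{lem:disc-hall-cancel}, applied with $\Delta_q^h\bU_h$ in place of $\bU_h$, gives $\ip{\bW_q\times\bV_h}{\bW_q}=0$. Subtracting this zero, the block contribution becomes the commutator pairing
\[
\ip{[\Delta_q^h,\bV_h\times]\,\Ch\bU_h}{\bW_q},
\]
where $\bV_h\times$ denotes pointwise multiplication.

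\textbf{Step 3: Bony decomposition.} Decompose the product $\bV_h\times\Ch\bU_h$ into paraproduct pieces $T_{\bV_h}\Ch\bU_h$, $T_{\Ch\bU_h}\bV_h$ and the remainder $R(\bV_h,\Ch\bU_h)$, built from discrete blocks truncated at Nyquist. The three pieces are handled as follows.
\begin{itemize}
\item Main paraproduct. For $T_{\bV_h}$, only $|p-q|\le 2$ contributes, and we use the standard kernel commutator bound
\[
\normh{[\Delta_q^h,S_{p-1}\bV_h]\,\Delta_p^h f}\lesssim 2^{-q}\norm{\nabla_h\bV_h}{\ell^\infty}\normh{\Delta_p^h f}.
\]
This bound needs the discrete Littlewood--Paley kernel $\check\varphi_q$ to have a first moment $\|x\check\varphi_q\|_{\ell^1}\lesssim2^{-q}$ uniformly in $h$, which holds for smooth symbols sampled on the periodic grid. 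It gains one derivative on $\Ch\bU_h$. Together with $\normh{\bW_q}\lesssim 2^q\normh{\Delta_q^h\bU_h}$ this gives $\lesssim\norm{\nabla_h\bV_h}{\ell^\infty}2^{q}\normh{\Delta_q^h\bU_h}^2$ up to neighbouring blocks. Summing with weight $2^{2sq}$ then gives $\norm{\bU_h}{H_h^{s+1/2}}^2$.
\item Second paraproduct. For $T_{\Ch\bU_h}\bV_h$, place the derivative on $\bV_h$ via $\normh{\Delta_p^h\bV_h}\lesssim 2^{-p}\normh{\Delta_p^h\nabla_h\bV_h}$. Bound the low-frequency factor $S_{p-1}\Ch\bU_h$ in $\ell^\infty$ by Bernstein's inequality, which costs $2^{2}$-type dimension factors.
\item Remainder. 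Use $\ell^\infty\times\ell^2$ Bernstein bounds and sum over $p\ge q-2$.
\end{itemize}
The condition $s>2$ is used exactly here: it makes the $2^{(s-p)}$-type tails summable and lets us close all the $H^s$-type sums with a single factor $\norm{\nabla_h\bV_h}{\ell^\infty}$. Where a Bernstein estimate would otherwise put norms of $\bU_h$ in $\ell^\infty$, we instead use the weighted sum $2^{2sq}$ to absorb the extra factor of $2^q$ from $\bW_q$. This splits the weight as $2^{(s+1/2)q}\cdot2^{(s+1/2)q}$, and Young's/Schur's inequality over the bounded-overlap dyadic indices produces the $H_h^{s+1/2}$ norm squared.

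\textbf{Main obstacle.} I expect the hard step to be the discrete commutator in the main paraproduct. On the grid, multiplication by $\bV_h$ followed by a Nyquist-truncated block produces aliasing. Frequencies $k+\ell$ beyond Nyquist fold back, so the product rule behind $[\Delta_q^h,\bV_h]$ is not exactly the continuum one. My first attempt would be to write the commutator as a discrete convolution,
\[
\sum_y\check\varphi_q(x-y)\bigl(\bV_h(y)-\bV_h(x)\bigr)f(y),
\]
and apply the discrete mean value inequality $|\bV_h(y)-\bV_h(x)|\le|x-y|_{\mathrm{per}}\norm{\nabla_h\bV_h}{\ell^\infty}$, taking $\nabla_h$ to be the forward difference. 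This avoids the Fourier-side product rule entirely and gives the bound uniformly in $h$. Aliasing then only affects which blocks interact, not the kernel estimate. I would also track that $\Ch$ is realized through $\Gh$, whose symbol is bounded by $C|\bm{k}|$, so that Bernstein-type bounds $\normh{\Ch\Delta_q^h f}\lesssim2^q\normh{\Delta_q^h f}$ hold with $h$-independent constants.
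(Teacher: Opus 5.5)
\textbf{There is a genuine gap, and it cannot be closed.} Your Steps~1--2 are exactly the paper's reduction to
$K_q:=\ip{\Delta_q^h\Hh(\bU_h;\bV_h)}{\Delta_q^h\bU_h}=\ip{[\Delta_q^h,\bV_h\times]\Ch\bU_h}{\Delta_q^h\Ch\bU_h}$.
The paper stops there and asserts, explicitly only formally, that the continuous Chae--Wan--Wu bound transfers. Your Step~3 tries to supply that bound. The step that fails is the second paraproduct $T_{\Ch\bU_h}\bV_h$, not the main-paraproduct commutator you single out.

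In $\ip{\Delta_q^h(S_{p-1}\Ch\bU_h\times\Delta_p^h\bV_h)}{\bW_q}$ with $|p-q|\le 2$, only one copy of $\bU_h$ (the one in $\bW_q$) is at frequency $2^q$; the other is at low frequency. The hypothesis on $\bV_h$ gives only $\norm{\Delta_p^h\bV_h}{\ell^\infty}\lesssim 2^{-p}\norm{\nabla_h\bV_h}{\ell^\infty}$, and this is used up by the factor $2^q$ in $\bW_q$. You are left with $\norm{\nabla_h\bV_h}{\ell^\infty}$, times a low-frequency norm of $\Ch\bU_h$, times $\sum_q 2^{2sq}\normh{\Delta_q^h\bU_h}$. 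That puts $2s$ derivatives on a single factor. Your splitting $2^{2sq}=2^{(s+1/2)q}\cdot 2^{(s+1/2)q}$ needs both $\bU_h$-factors at frequency $\sim 2^q$, which is exactly what fails here. The condition $s>2$ does not help either: the remainder tails $2^{(s+1/2)(q-p)}$, $p\ge q-3$, are summable for any $s>-1/2$.

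\textbf{Counterexample.} The inequality \eqref{eq:cww-commutator} is false when $\bU_h\neq\bV_h$. For $8M\le N$ take
\[
\bU_h=\bigl(0,\ b\sin(Mx)\cos y,\ a\sin y\bigr),\qquad \bV_h=\bigl(0,\ M^{-1}\cos(Mx),\ 0\bigr).
\]
With the $\Gh$ symbols, $\Ch\bU_h=(c_1a\cos y,\,0,\,c_2bM\cos(Mx)\cos y)$ with $c_1,c_2\in[\tfrac12,1]$. The $z$-component of $(\Ch\bU_h)\times\bV_h$ is exactly $c_1aM^{-1}\cos y\cos(Mx)$.

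Pick a block $q$ with $2^q\sim M$ whose symbol is $\ge\tfrac12$ at $(\pm M,\pm1)$ and vanishes at $(0,\pm1)$. Then $\Delta_q^h\Ch\bU_h$ has only a $z$-component, and $|K_q|\simeq ab$. Choose $a=M^{s+1/2}b$. The left side of \eqref{eq:cww-commutator} is then $\gtrsim M^{3s+1/2}b^2$. On the other side, $\norm{\nabla_h\bV_h}{\ell^\infty}\lesssim 1$ and $\norm{\bU_h}{H_h^{s+1/2}}^2\lesssim a^2+M^{2s+1}b^2\lesssim M^{2s+1}b^2$. So the ratio grows like $M^{s-1/2}$ as $h\to0$.

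The lemma imposes no divergence constraint. In any case $\bV_h$ is in-plane divergence-free, and $\bU_h$ becomes so if its in-plane part is replaced by $(D_y\psi,-D_x\psi)$ with $\psi\propto\cos(Mx)\cos y$.

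\textbf{What your argument does give.} Your bounds for the main paraproduct, the remainder, and $(\Delta_q^h\Ch\bU_h)\times(\bV_h-S_{q-1}\bV_h)$ are sound. Estimating $T_{\Ch\bU_h}\bV_h$ by $\norm{\Ch\bU_h}{\ell^\infty}\normh{\Delta_p^h\bV_h}$ instead, you obtain
\[
\sum_{q\ge-1}2^{2sq}|K_q|\le C_s\Bigl(\norm{\nabla_h\bV_h}{\ell^\infty}\norm{\bU_h}{H_h^{s+1/2}}+\norm{\Ch\bU_h}{\ell^\infty}\norm{\bV_h}{H_h^{s+1/2}}\Bigr)\norm{\bU_h}{H_h^{s+1/2}}.
\]
This reduces to the stated form only for $\bU_h=\bV_h$, which is the setting of Chae--Wan--Wu. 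The paper's formal transfer overlooks the same point, so its proof does not establish the frozen-$\bV_h$ bilinear version either.
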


\begin{proof}
Using \eqref{eq:curl-self-adjoint} and the fact that \(\Delta_q^h\) commutes with \(\Ch\),
\begin{align*}
  K_q := \ip{\Delta_q^h\Hh(\bU_h;\bV_h)}{\Delta_q^h\bU_h} = \ip{\Delta_q^h\bigl((\Ch\bU_h)\times\bV_h\bigr)}{\Delta_q^h\Ch\bU_h}.
\end{align*}
Decompose
\begin{equation*}
  \Delta_q^h\bigl((\Ch\bU_h)\times\bV_h\bigr) = (\Delta_q^h\Ch\bU_h)\times\bV_h + [\Delta_q^h,\bV_h\times]\Ch\bU_h,
\end{equation*}
where
\begin{equation*}
  [\Delta_q^h,\bV_h\times]\bW_h := \Delta_q^h(\bW_h\times\bV_h) -(\Delta_q^h\bW_h)\times\bV_h.
\end{equation*}
The leading term cancels pointwise:
\begin{equation*}
  \ip{(\Delta_q^h\Ch\bU_h)\times\bV_h}{\Delta_q^h\Ch\bU_h}=0.
\end{equation*}
Thus
\begin{equation*}
  K_q = \ip{[\Delta_q^h,\bV_h\times]\Ch\bU_h}{\Delta_q^h\Ch\bU_h}.
\end{equation*}
On the uniform periodic grid the blocks \(\Delta_q^h\) satisfy the same Bernstein and almost-orthogonality properties as their continuous counterparts, so the continuous Chae--Wan--Wu paraproduct estimate (\cite{ChaeWanWu15}) is expected to transfer to the discrete commutator with constants uniform in \(h\), giving
\begin{equation*}
  \sum_{q\ge -1}2^{2sq}|K_q| \le C_s\norm{\nabla_h\bV_h}{\ell^\infty}\norm{\bU_h}{H_h^{s+1/2}}^2.
\end{equation*}
We treat this transfer formally: a fully rigorous justification would require a discrete Littlewood--Paley calculus on the periodic grid, which we do not develop here.  The higher-order estimate of Proposition~\ref{prop:Hs-stability} should therefore be read as a formal discrete analogue of the continuous Chae--Wan--Wu result (\cite{ChaeWanWu15}), included to show that the discretization respects the same Hall cancellation mechanism at higher regularity; the unconditional $L^2$ estimate of Theorem~\ref{thm:L2-stability} is fully rigorous and does not depend on this transfer.
\end{proof}

\begin{proposition}[Discrete \(H_h^s\) stability estimate]\label{prop:Hs-stability}
Let \(1<\alpha\le2\) and \(s>2\).  Under the assumptions of Theorem~\ref{thm:L2-stability}, one step of \eqref{eq:stable-scheme}--\eqref{eq:stable-projection} satisfies
\begin{multline}\label{eq:Hs-before-absorb}
  \norm{\bB_h^{n+1,*}}{H_h^s}^2 + c\tau\norm{\bB_h^{n+1,*}}{H_h^{s+\alpha/2}}^2 + \norm{\bB_h^{n+1,*}-\bB_h^n}{H_h^s}^2     \\
  \le \norm{\bB_h^n}{H_h^s}^2 + C_s\tau \norm{\nabla_h\bB_h^n}{\ell^\infty}^{\alpha/(\alpha-1)} \norm{\bB_h^{n+1,*}}{H_h^s}^2,
\end{multline}
where \(c>0\) is independent of \(h\) and \(\tau\).  In particular, if
\begin{equation}\label{eq:time-local-condition}
  C_s\tau \norm{\nabla_h\bB_h^n}{\ell^\infty}^{\alpha/(\alpha-1)} \le \frac12,
\end{equation}
then
\begin{align}\label{eq:Hs-final}
  \norm{\bB_h^{n+1}}{H_h^s}^2 + c\tau\norm{\bB_h^{n+1,*}}{H_h^{s+\alpha/2}}^2 \le \left( 1+C_s\tau \norm{\nabla_h\bB_h^n}{\ell^\infty}^{\alpha/(\alpha-1)}\right)\norm{\bB_h^n}{H_h^s}^2.
\end{align}
Consequently, for \(m\tau\le T\),
\begin{align}\label{eq:discrete-gronwall}
  \norm{\bB_h^m}{H_h^s}^2 + c\sum_{n=0}^{m-1}\tau \norm{\bB_h^{n+1,*}}{H_h^{s+\alpha/2}}^2 \le \norm{\bB_h^0}{H_h^s}^2 \exp\Big( C_s\sum_{n=0}^{m-1}\tau \norm{\nabla_h\bB_h^n}{\ell^\infty}^{\alpha/(\alpha-1)}\Big).
\end{align}
\end{proposition}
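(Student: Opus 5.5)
The plan is to localize the $L^2$ energy argument of Theorem~\ref{thm:L2-stability} to each dyadic block. We then control the Hall contribution by Lemma~\ref{lem:cww-commutator} and absorb the resulting $H_h^{s+1/2}$ loss into the fractional dissipation by interpolation and Young's inequality.

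\emph{Blockwise energy identity.} Apply $\Delta_q^h$ to \eqref{eq:stable-scheme}. Since the blocks commute with $\Lambda_h^\alpha$, this gives an equation for $\Delta_q^h\bB_h^{n+1,*}$. Take its grid inner product with $2\tau\,\Delta_q^h\bB_h^{n+1,*}$ and multiply by $2^{2sq}$. The time-difference term produces the polarization identity exactly as in the proof of Theorem~\ref{thm:L2-stability}. The penalty term is non-negative, and we discard it; if necessary we assume $L_{b,h}$ commutes with the blocks on the uniform grid, or else keep it only at $q$-level as a non-negative quantity.

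For the dissipation we use the multiplier bound $|\bm k|\sim 2^q$ on the support of $\Delta_q^h$ for $q\ge0$. This is a Bernstein-type lower bound, and it gives
\[
2^{2sq}\ip{\Lambda_h^\alpha\Delta_q^h\bU_h}{\Delta_q^h\bU_h}\ge c\,2^{2(s+\alpha/2)q}\normh{\Delta_q^h\bU_h}^2 .
\]
The low block $q=-1$ contributes at least zero. Its deficit relative to the full $H_h^{s+\alpha/2}$ norm is bounded by $\normh{\Delta_{-1}^h\bU_h}^2\le\norm{\bU_h}{H_h^s}^2$, which we either absorb on the right-hand side or handle by shrinking $c$. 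Summing over $q$ gives \eqref{eq:Hs-before-absorb} with $2\tau\sum_q 2^{2sq}|K_q|$ still remaining on the right-hand side. Lemma~\ref{lem:cww-commutator} bounds this term by
\[
2\tau C_s\norm{\nabla_h\bB_h^n}{\ell^\infty}\norm{\bB_h^{n+1,*}}{H_h^{s+1/2}}^2 .
\]

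\emph{Absorption, the main step.} Because $\alpha>1$, we have $s<s+1/2<s+\alpha/2$. Interpolation in the dyadic norm \eqref{eq:Hs-discrete}, via H\"older's inequality on the sequence $2^{2sq}\normh{\Delta_q^h\cdot}^2$ with $\theta=1/\alpha$, gives
\[
\norm{\bU}{H_h^{s+1/2}}^2\le\norm{\bU}{H_h^{s}}^{2(1-\theta)}\norm{\bU}{H_h^{s+\alpha/2}}^{2\theta}.
\]
Young's inequality with exponents $1/\theta=\alpha$ and $1/(1-\theta)=\alpha/(\alpha-1)$ then yields
\[
C\norm{\nabla_h\bB_h^n}{\ell^\infty}\norm{\bU}{H_h^{s+1/2}}^2\le \tfrac{c}{2}\norm{\bU}{H_h^{s+\alpha/2}}^2 + C_s\norm{\nabla_h\bB_h^n}{\ell^\infty}^{\alpha/(\alpha-1)}\norm{\bU}{H_h^s}^2 .
\]
This produces the exponent in \eqref{eq:Hs-before-absorb}. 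The estimate is rigorous only modulo the formal transfer in Lemma~\ref{lem:cww-commutator}, which we take as given.

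\emph{Consequences.} Under \eqref{eq:time-local-condition}, move the last term of \eqref{eq:Hs-before-absorb} to the left and divide by $1-C_s\tau X_n\ge1/2$, where $X_n=\norm{\nabla_h\bB_h^n}{\ell^\infty}^{\alpha/(\alpha-1)}$. Since $(1-y)^{-1}\le1+2y$ for $y\le1/2$, this gives \eqref{eq:Hs-final} after renaming $C_s$. The projection step is handled by the fact that $\Ph$ commutes with $\Delta_q^h$ and is an orthogonal contraction \eqref{eq:projection-contract}. Hence $\normh{\Delta_q^h\Ph\bU}\le\normh{\Delta_q^h\bU}$, so $\norm{\bB_h^{n+1}}{H_h^s}\le\norm{\bB_h^{n+1,*}}{H_h^s}$. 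Finally, iterate \eqref{eq:Hs-final} and use $1+x\le e^x$ to telescope the products, which gives \eqref{eq:discrete-gronwall}.
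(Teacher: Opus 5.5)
Your proof follows the paper's own argument step for step. You block the scheme with $\Delta_q^h$ and use the polarization identity, bound the Hall part by Lemma~\ref{lem:cww-commutator} with $\bU_h=\bB_h^{n+1,*}$, $\bV_h=\bB_h^n$, interpolate with $\theta=1/\alpha$, and apply Young's inequality with exponents $\alpha$ and $\alpha/(\alpha-1)$. You then absorb under \eqref{eq:time-local-condition}, use that $\Ph$ commutes with the blocks, and iterate with $1+x\le e^x$.

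\textbf{The low block.} This is the one place you go beyond the paper, which only says the dissipation is ``handled as in the proof of Theorem~\ref{thm:L2-stability}''. Here your second remedy, shrinking $c$, cannot work. The symbol $|\bm{k}|^\alpha$ vanishes at $\bm{k}=0$, and the scheme conserves the mean. Concretely, suppose $\bB_h^n\equiv\bm{b}\neq0$ is constant. Then $\Ch\bm{b}=0$, $\Lambda_h^\alpha\bm{b}=0$, and $L_{b,h}\bm{b}=0$ (because $\Gh$ kills constants). By unique solvability of the linear step, $\bB_h^{n+1,*}=\bm{b}$. Since also $\nabla_h\bB_h^n=0$, \eqref{eq:Hs-before-absorb} would force $c\tau\,2^{-2(s+\alpha/2)}\normh{\bm{b}}^2\le0$. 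So the inequality as stated fails on the zero mode, and so do \eqref{eq:Hs-final} and \eqref{eq:discrete-gronwall}; the paper's own sketch has the same defect.

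\textbf{What your argument actually proves.} Committing to your first remedy gives \eqref{eq:Hs-before-absorb} with an extra $C\tau\norm{\bB_h^{n+1,*}}{H_h^s}^2$ on the right. Equivalently, $\norm{\nabla_h\bB_h^n}{\ell^\infty}^{\alpha/(\alpha-1)}$ is replaced by $1+\norm{\nabla_h\bB_h^n}{\ell^\infty}^{\alpha/(\alpha-1)}$ throughout \eqref{eq:time-local-condition}--\eqref{eq:discrete-gronwall}. The Gronwall exponent then picks up $C_s m\tau\le C_sT$, which is where the hypothesis $m\tau\le T$ naturally enters. Alternatively, keep the dissipative norm only on $q\ge0$, or assume mean-zero data.

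\textbf{Two small points.}
\begin{itemize}
\item The deficit bound you need is $2^{-2(s+\alpha/2)}\normh{\Delta_{-1}^h\bU_h}^2\le\norm{\bU_h}{H_h^s}^2$. Your intermediate claim $\normh{\Delta_{-1}^h\bU_h}^2\le\norm{\bU_h}{H_h^s}^2$ is off by a factor $2^{2s}$.
\item For the penalty, only your first alternative is sound. On the uniform periodic grid $L_{b,h}$ is translation invariant, hence a non-negative Fourier multiplier that commutes with $\Delta_q^h$. Without commutation, $\ip{\Delta_q^hL_{b,h}\bU_h}{\Delta_q^h\bU_h}$ need not be non-negative, so ``keeping it at $q$-level'' is not by itself justified.
\end{itemize}
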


\begin{proof}
Apply \(\Delta_q^h\) to \eqref{eq:stable-scheme}, take the inner product with \(\Delta_q^h\bB_h^{n+1,*}\), multiply by \(2^{2sq}\), and sum over \(q\). The time derivative, the fractional dissipation, and the penalty are handled as in the proof of Theorem~\ref{thm:L2-stability}.  The Hall contribution is bounded by Lemma~\ref{lem:cww-commutator} with \(\bU_h=\bB_h^{n+1,*}\) and \(\bV_h=\bB_h^n\):
\begin{equation}\label{eq:hall-Hs-bound}
  \sum_{q\ge -1}2^{2sq}|K_q| \le C_s\norm{\nabla_h\bB_h^n}{\ell^\infty} \norm{\bB_h^{n+1,*}}{H_h^{s+1/2}}^2.
\end{equation}
Since \(\alpha>1\), interpolation gives
\begin{equation}\label{eq:interp-alpha}
  \norm{\bU_h}{H_h^{s+1/2}}^2 \le \norm{\bU_h}{H_h^s}^{2(1-1/\alpha)}\norm{\bU_h}{H_h^{s+\alpha/2}}^{2/\alpha}.
\end{equation}
Combining \eqref{eq:hall-Hs-bound} and \eqref{eq:interp-alpha}, then applying
Young's inequality, yields
\begin{align*}
  C_s\norm{\nabla_h\bB_h^n}{\ell^\infty} \norm{\bB_h^{n+1,*}}{H_h^{s+1/2}}^2 &\le
  \frac{c}{2}\norm{\bB_h^{n+1,*}}{H_h^{s+\alpha/2}}^2   \\
  &\quad+ C_s\norm{\nabla_h\bB_h^n}{\ell^\infty}^{\alpha/(\alpha-1)}\norm{\bB_h^{n+1,*}}{H_h^s}^2.
\end{align*}
This proves \eqref{eq:Hs-before-absorb}.  If \eqref{eq:time-local-condition} holds, the last term can be absorbed into the left-hand side up to a harmless change of the constant \(C_s\), giving \eqref{eq:Hs-final}.  Finally, \(\Ph\) is an orthogonal Fourier multiplier and commutes with \(\Delta_q^h\), so \(\norm{\bB_h^{n+1}}{H_h^s}\le \big\|\bB_h^{n+1,*}\big\|_{H_h^s}\).  Iteration gives \eqref{eq:discrete-gronwall}.
\end{proof}

\begin{remark}
\label{rem:Hs-formal}
Proposition~\ref{prop:Hs-stability} relies on the discrete paraproduct transfer used in the proof of Lemma~\ref{lem:cww-commutator}, which we adopt formally by analogy with the continuous theory rather than prove through a discrete Littlewood--Paley calculus.  It is therefore best read as a formal discrete analogue of the Chae--Wan--Wu estimate, demonstrating that the discretization respects the same Hall cancellation mechanism at higher regularity.  The unconditional $L^2$ stability of Theorem~\ref{thm:L2-stability}, by contrast, is fully rigorous and independent of this transfer.
\end{remark}

\begin{remark}
\label{rem:alpha-threshold}
The Chae--Wan--Wu cancellation reduces the Hall contribution to a commutator that costs only one half derivative above the \(H_h^s\) energy level, namely \(H_h^{s+1/2}\).  The dissipation generated by \(\Lambda_h^\alpha=(-\Delta_h)^{\alpha/2}\) controls \(H_h^{s+\alpha/2}\).  Therefore the interpolation step \eqref{eq:interp-alpha} requires \(s+1/2<s+\alpha/2\), which is exactly \(\alpha>1\).  In the notation of \cite{ChaeWanWu15}, the diffusion is written as \((-\Delta)^\alpha\), so the corresponding condition is \(\alpha>1/2\).
\end{remark}

\begin{remark}
\label{rem:IF-stability}
The integrating-factor Euler update
\begin{equation*}
  \widehat{\bB}^{n+1}_{\bm{k}} =  e^{-|\bm{k}|^\alpha\tau} \left( \widehat{\bB}^{n}_{\bm{k}} -\tau\widehat{\mathcal{H}[\bB^n]}_{\bm{k}} \right)
\end{equation*}
is unconditionally stable for the fractional diffusion substep, but it does not inherit the exact nonlinear energy identity of Theorem~\ref{thm:L2-stability}.  Even if \(\ip{\mathcal{H}[\bB_h^n]}{\bB_h^n}=0\), the explicit Hall update gives
\begin{equation*}
  \normh{\bB_h^n-\tau\mathcal{H}[\bB_h^n]}^2 = \normh{\bB_h^n}^2 + \tau^2\normh{\mathcal{H}[\bB_h^n]}^2.
\end{equation*}
Thus the diffusion part is unconditional, whereas the fully explicit Hall part remains subject to a Hall CFL condition.  The unconditional nonlinear energy estimate is a property of the semi-implicit curl-form discretization \eqref{eq:stable-scheme}.  A second-order BDF2 variant of the scheme, which satisfies an analogous modified-energy stability theorem, is described in Appendix~\ref{app:bdf2}.
\end{remark}

\section{Implementation on the Uniform Periodic Grid}\label{sec:impl}
All computations are carried out on the uniform $N\times N$ periodic grid on $\Omega = [0,2\pi]^2$ with grid spacing $h = 2\pi/N$.  On this grid the recovery operators admit an exact Fourier realization, so all differential operators are evaluated via FFTs.  While the analysis of Section~\ref{sec:stability} is carried out for the semi-implicit curl-form discretization \eqref{eq:stable-scheme}, all numerical experiments in Section~\ref{sec:numerics} use the explicit-Hall integrating-factor implementation described below.  The two schemes are based on the same Hall operator and differential identities, but the explicit-Hall implementation is unconditionally stable only for the fractional-diffusion substep and does not inherit the exact nonlinear energy identity of Theorem~\ref{thm:L2-stability} (see Remark~\ref{rem:IF-stability}).  We use the explicit-Hall implementation in the experiments because it is computationally simpler on the periodic grid, reducing each step to a few FFTs, while the semi-implicit formulation is what exposes the discrete Hall cancellation and supplies the stability mechanism.

\paragraph{Spectral realization of the recovery gradient $G_h$}
On the uniform $N\times N$ Cartesian grid, every interior edge patch $\Ke$ consists of two right triangles whose union is an $h\times h$ square. In this geometry the patch average of $\nabla v_h$ over $\Ke$ reduces to a fixed linear combination of nodal values.  Summing over all contributing patches yields the nine-point stencil
\begin{equation}\label{eq:stencil}
  (\Gh f)^x_{i,j} = \frac{1}{4h}\Bigl[ \bigl(f_{i+1,j} - f_{i-1,j}\bigr) + \tfrac{1}{2}\bigl(f_{i+1,j+1} - f_{i-1,j+1}\bigr) + \tfrac{1}{2}\bigl(f_{i+1,j-1} - f_{i-1,j-1}\bigr) \Bigr],
\end{equation}
with the $y$-component given by symmetry. This is a separable convolution: a central difference in $x$ followed by the averaging filter $\tfrac{1}{4}[\delta_{j-1} + 2\delta_j + \delta_{j+1}]$ in $y$.

Taking the 2D DFT with integer wavenumbers $(k_x, k_y)\in\{-N/2+1,\ldots,N/2\}^2$, the Fourier symbol of $\Gh$ is
\begin{equation}\label{eq:symbol}
  \widehat{(\Gh f)^x}_{k_x,k_y} = \sigma^x(k_x, k_y)\,\hat{f}_{k_x,k_y},
  \qquad
  \sigma^x(k_x,k_y)  = -\frac{i\sin(k_x h)}{h}\cdot\frac{1+\cos(k_y h)}{2}.
\end{equation}

\begin{remark}[Accuracy and aliasing suppression]
The exact spectral $x$-derivative has symbol $ik_x$.  The stencil symbol satisfies
\[
  \sigma^x(k_x,k_y) = -ik_x\Bigl[1-\tfrac{(k_x h)^2}{6}+O((k_x h)^4)\Bigr] \Bigl[1-\tfrac{(k_y h)^2}{4}+O((k_y h)^4)\Bigr],
\]
so $\sigma^x = -ik_x + O((kh)^2)$ for resolved modes.  At the Nyquist frequency $k_y = N/2$, the averaging factor $(1+\cos(\pi))/2 = 0$ damps the corresponding near-Nyquist modes, providing a mild built-in filtering effect; this is not, however, a full de-aliasing rule for nonlinear products. In the code, \eqref{eq:stencil} is applied via \texttt{numpy.fft.fft2}/\texttt{ifft2}, which is bit-for-bit equivalent to direct stencil application but generalizes trivially to any Fourier-diagonal operator.
\end{remark}

\paragraph{Fractional Laplacian via integrating factor}
On the uniform periodic grid, $\Lal$ is applied exactly in Fourier space by \eqref{eq:frac-spec}. The integrating-factor update
\begin{equation}\label{eq:IF}
  \hat{B}^{n+1}_{\bm{k}} = e^{-|\bm{k}|^\alpha\tau} \bigl(\hat{B}^n_{\bm{k}} - \tau\,\widehat{\mathcal{H}[\bB^n]}_{\bm{k}}\bigr),
\end{equation}
where $\mathcal{H}$ denotes the explicit Hall term, handles dissipation exactly regardless of $\tau$ and $h$. The remaining practical time-step restriction is the hyperbolic Hall CFL $\tau \lesssim h/\max|\bB|$.

\begin{remark}[Unconditional stability of the IF method]
For any $\alpha > 0$ and $\tau, h > 0$, the integrating factor $e^{-|\bm{k}|^\alpha\tau} \in (0,1]$ for all $\bm{k}$. This means the diffusion step in \eqref{eq:IF} is unconditionally stable: regardless of how large $\tau$ is, no Fourier mode grows from the dissipation.  In contrast, a fully explicit treatment of the fractional diffusion would require a parabolic time-step restriction $\tau \lesssim h^\alpha / C$.
\end{remark}

\paragraph{Divergence-free projection}
After the Hall update, the in-plane components acquire a spurious divergence of order $O(\tau h)$.  This is corrected by a spectral Hodge projection: solve $\Delta\phi = \Div\bB$ in Fourier space,
\[
  \hat{\phi}_{\bm{k}} = -\frac{\widehat{(\Div\bB)}_{\bm{k}}}{|\bm{k}|^2}, \quad |\bm{k}|\neq 0,
\]
and set $\bB \leftarrow \bB - \nabla\phi$.  This reduces $\max|\Div\bB_h|$ to the round-off floor ($10^{-16}$--$10^{-13}$, scaling with the field amplitude and its gradients) and costs two forward and two inverse FFTs.  The out-of-plane component $B_z$ requires no projection.

\paragraph{Coupling summary}
The in-plane pair $(B_x, B_y)$ is forced by $\nabla B_z$ through the Hall terms \eqref{eq:hall-inplane}; $B_z$ is forced by the in-plane current $J = \partial_x B_y - \partial_y B_x$ through \eqref{eq:hall-outofplane}. In the explicit-Hall implementation, all three components use only the time-$n$ state in the Hall force evaluation, so the coupling within a step is one-directional.  The complete update is:
\begin{enumerate}[label=(\roman*),noitemsep]
  \item Evaluate Hall forces $(H_x^n, H_y^n, H_z^n)$ using \eqref{eq:symbol}.
  \item Apply integrating-factor step \eqref{eq:IF}: unconditionally stable.
  \item Project $(B_x^{n+1}, B_y^{n+1})$ via spectral Hodge: divergence to round-off.
\end{enumerate}

\section{Numerical Examples}\label{sec:numerics}
All experiments use the periodic domain $\Omega=[0,2\pi]^2$ on a uniform $N=128$ grid ($h=2\pi/128\approx0.049$) with time step $\tau=0.001$ and the integrating-factor Euler scheme of Section~\ref{sec:impl}. The Hall CFL condition $\tau \lesssim h/\max|\bB|$ is comfortably satisfied for all field amplitudes considered. The spectral Hodge projection drives $\max|\Div\bB_h|$ to the round-off floor ($10^{-16}$--$10^{-13}$ depending on the field amplitude) at every step; see Tables~\ref{tab:conv} and~\ref{tab:alpha}.

We first establish the spatial accuracy of the scheme through a convergence study, then present three qualitatively distinct benchmark initial conditions: Example~1 subjects a sharp shear layer to mild Hall forcing and tests stable evolution of a sharp-gradient configuration; Example~2 balances Hall and dissipation to exhibit island merging; and Example~3 provides a controlled verification against an analytically computable multi-mode decay formula.

\paragraph{Spatial convergence}
Table~\ref{tab:conv} reports $L^2$ and $H^1$ errors for the three fractional exponents against a fine reference solution. We use the smooth Orszag--Tang type initial condition, 
\[
\begin{aligned}
B_x &= 0.3\bigl(-\sin y + 0.1\cos(2x)\cos y\bigr),\\
B_y &= 0.3\bigl(\sin x + 0.1\sin x\,\sin(2y)\bigr),\\
B_z &= 0.15(\cos x+\cos y),
\end{aligned}
\]
followed by the spectral Hodge projection of the in-plane components. The time step $\tau$ is held {fixed} across all grid resolutions (and is also used for the reference solution). This prevents temporal errors from contaminating the comparison and isolates the observed spatial convergence, without requiring $\tau$ to scale with $h$. The reference solution is computed on a substantially finer grid, using $N=512$ for $\alpha=1.5$ and $2.0$, and $N=384$ for $\alpha=1.2$. Since weaker dissipation requires a smaller time step to maintain stability, using $N=384$ for $\alpha=1.2$ avoids the more restrictive time-step requirement associated with $N=512$.

\begin{table}[h]
\centering
\caption{Spatial convergence ($T=0.3$).  Reference at $N=512$ for $\alpha\in\{1.5,2.0\}$ ($\tau=10^{-3}$) and at $N=384$ for $\alpha=1.2$ ($\tau=5\times10^{-4}$).  Rates converge to $\approx 2$, consistent with the second-order accuracy of $G_h$ on the uniform periodic grid.  The divergence $\max|\Div\bB_h|$ stays near round-off.}
\label{tab:conv}
\smallskip
\setlength{\tabcolsep}{5pt}
\begin{tabular}{ccrrrrrr}
\toprule
$\alpha$ & $N$ & $h$ & $L^2$ error & rate & $H^1$ error & rate
         & $\max|\Div\bB_h|$ \\
\midrule
\multirow{4}{*}{$1.2$}
 & 16  & 0.3927 & $2.15\times10^{-4}$ & ---  & $4.10\times10^{-4}$ & ---  & $4.0\times10^{-16}$ \\
 & 32  & 0.1963 & $5.90\times10^{-5}$ & 1.86 & $1.29\times10^{-4}$ & 1.67 & $1.1\times10^{-15}$ \\
 & 64  & 0.0982 & $1.48\times10^{-5}$ & 1.99 & $3.35\times10^{-5}$ & 1.94 & $2.4\times10^{-15}$ \\
 & 128 & 0.0491 & $3.41\times10^{-6}$ & 2.12 & $7.77\times10^{-6}$ & 2.11 & $6.0\times10^{-15}$ \\
\midrule
\multirow{4}{*}{$1.5$}
 & 16  & 0.3927 & $1.74\times10^{-4}$ & ---  & $3.28\times10^{-4}$ & ---  & $4.3\times10^{-16}$ \\
 & 32  & 0.1963 & $4.79\times10^{-5}$ & 1.86 & $1.02\times10^{-4}$ & 1.69 & $1.1\times10^{-15}$ \\
 & 64  & 0.0982 & $1.21\times10^{-5}$ & 1.98 & $2.66\times10^{-5}$ & 1.94 & $2.6\times10^{-15}$ \\
 & 128 & 0.0491 & $2.90\times10^{-6}$ & 2.06 & $6.41\times10^{-6}$ & 2.05 & $5.5\times10^{-15}$ \\
\midrule
\multirow{4}{*}{$2.0$}
 & 16  & 0.3927 & $1.10\times10^{-4}$ & ---  & $2.02\times10^{-4}$ & ---  & $3.7\times10^{-16}$ \\
 & 32  & 0.1963 & $3.02\times10^{-5}$ & 1.86 & $6.13\times10^{-5}$ & 1.72 & $1.0\times10^{-15}$ \\
 & 64  & 0.0982 & $7.65\times10^{-6}$ & 1.98 & $1.59\times10^{-5}$ & 1.95 & $2.8\times10^{-15}$ \\
 & 128 & 0.0491 & $1.83\times10^{-6}$ & 2.06 & $3.84\times10^{-6}$ & 2.05 & $6.0\times10^{-15}$ \\
\bottomrule
\end{tabular}
\end{table}

The $L^2$ rate increases from approximately $1.86$ ($N=16\to32$) to approximately $2.1$ ($N=64\to128$), and the $H^1$ rate similarly approaches approximately $2.1$, consistent with the second-order accuracy of $G_h$ on the uniform periodic grid. The rates are consistent across all values of $\alpha$, confirming that they reflect the spatial accuracy of $G_h$ rather than $\alpha$-dependent dynamics. The divergence error remains near round-off ($10^{-16}$--$10^{-15}$) throughout, validating the spectral Hodge projection.

\noindent {\bf Example 1.} Kelvin--Helmholtz shear layer ($\alpha=1.2$).

We choose the following initial condition,
\begin{equation}\label{eq:kh-ic}
\begin{aligned}
  & B_x = -1 + \tanh\!\Bigl(\frac{y-\pi/2}{0.5}\Bigr) - \tanh\!\Bigl(\frac{y-3\pi/2}{0.5}\Bigr) + 0.1\sin x\cos 2y,\\
  & B_y = 0.1\cos 2x\sin y + 0.05\sin 3x,\\
  & B_z = 0.5\sin x\sin y + 0.3\cos 2x\cos y,
\end{aligned}
\end{equation}
followed by the spectral Hodge projection of the in-plane components. The two oppositely-signed interfaces at $y=\pi/2$ and $y=3\pi/2$, each of width $\approx 0.5$, make $B_x$ exactly periodic in $y$; this is essential on the spectral grid, since a single interface would leave a jump across the seam $y=0\equiv2\pi$ that the periodic derivative would render as a spurious boundary current dominating the physical layers.  The double-interface profile generates two genuine shear layers with $\max|J_h|\approx 2.3$ at $t=0$ (post-projection energy $E_0\approx 0.77$).  Multi-mode perturbations in $B_y$ and the non-trivial $B_z$ break translational symmetry and seed the Hall coupling.

\paragraph{Physical regime}
With $\alpha=1.2$ the fractional operator damps large wavenumbers only weakly. For the dominant modes at $|k|\sim 1$ the dissipation rate $|k|^\alpha=1$ is exceeded by the Hall forcing $|\bB||\nabla J|\approx 3$, so this is a mildly Hall-dominated test: the Hall term deforms the shear layers and couples them to $B_z$ without overwhelming the dissipation~\cite{ChaconSimakovZocco07}.

\paragraph{Dynamics}
Figure~\ref{fig:kh-snap} traces the evolution at $t\in\{0,\,0.10,\,0.25,\,0.45\}$. At $t=0$ the current density $J_h$ shows two coherent horizontal bands of opposite sign at $y=\pi/2$ and $y=3\pi/2$.  By $t=0.10$ the Hall coupling to $B_z$ has imposed a gentle in-plane undulation on these bands; $\max|J_h|$ has decreased from $2.32$ to $1.93$ and the energy from $E_0\approx 0.77$ to $\approx 0.61$.  By $t=0.25$ the bands carry a clear wavy modulation with $\max|J_h|=1.51$ ($E/E_0=0.57$), and by $t=0.45$ they remain coherent but visibly rippled, with $\max|J_h|=1.06$ ($E/E_0=0.37$).  The normalized spatial correlation between $J_h$ at $t=0$ and $t=0.45$ is $0.89$: the Hall term reshapes the layers without destroying them, so the evolution is a stable, mildly Hall-perturbed decay.  This is the expected behavior for smooth, moderate-amplitude shear data in the mildly Hall-dominated regime, and serves as a stress test of the scheme on a sharp-gradient configuration.

\paragraph{Effect of $\alpha$}
Figure~\ref{fig:alpha-comp} and Table~\ref{tab:alpha} show the sensitivity to the fractional exponent over $t\in[0,0.2]$. The normalized energy $E(T)/E_0$ equals $0.637$, $0.628$, and $0.616$ for $\alpha=1.2$, $1.5$, and $2.0$: a spread of only about $3\%$, because the energy is dominated by large-scale modes at $|k|\sim 1$ where $|k|^\alpha$ varies little between exponents. The enstrophy $\Omega(T)/\Omega_0$ equals $0.590$, $0.560$, and $0.527$: a relative spread of about $11\%$.  The asymmetry --- a small energy difference but a larger enstrophy difference --- is the fingerprint of the fractional Laplacian acting preferentially at high wavenumbers, where the $k^2$-weighted enstrophy places more of its mass.

\begin{figure}[h]
  \centering
  \includegraphics[width=\textwidth]{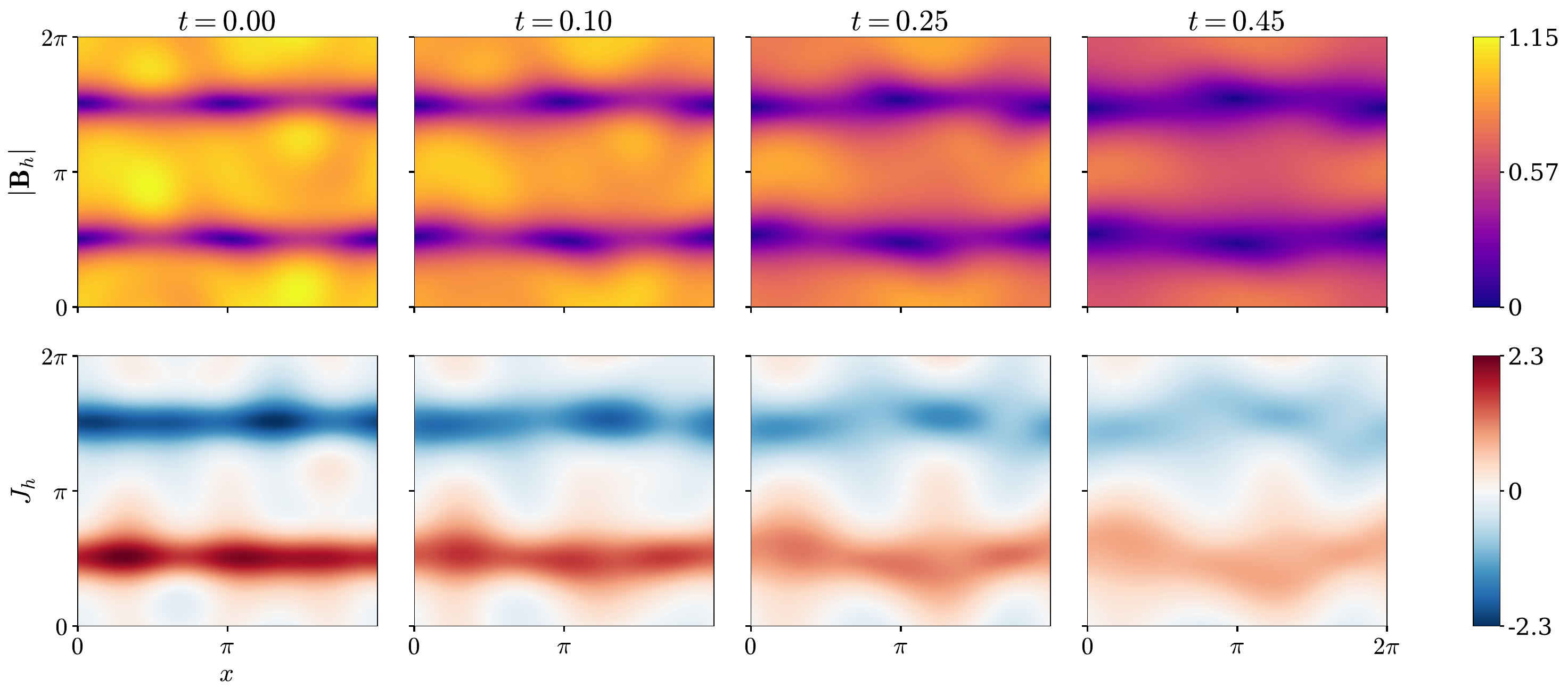}
  \caption{Kelvin--Helmholtz shear IC \eqref{eq:kh-ic} ($\alpha=1.2$, $N=128$) at $t\in\{0,\,0.10,\,0.25,\,0.45\}$. \emph{Top row}: magnetic field magnitude $|\bB_h|$ on a shared colorbar. \emph{Bottom row}: current density $J_h$ on a shared symmetric colorbar. The two opposite-signed shear layers at $y=\pi/2$ and $y=3\pi/2$ stay coherent while developing a Hall-driven undulation; $\max|J_h|$ decays from $2.3$ to $1.1$.  The spatial correlation of $J_h$ between $t=0$ and $t=0.45$ is $0.89$, confirming a stable, mildly perturbed evolution.}
  \label{fig:kh-snap}
\end{figure}

\begin{figure}[h]
  \centering
  \includegraphics[width=0.88\textwidth]{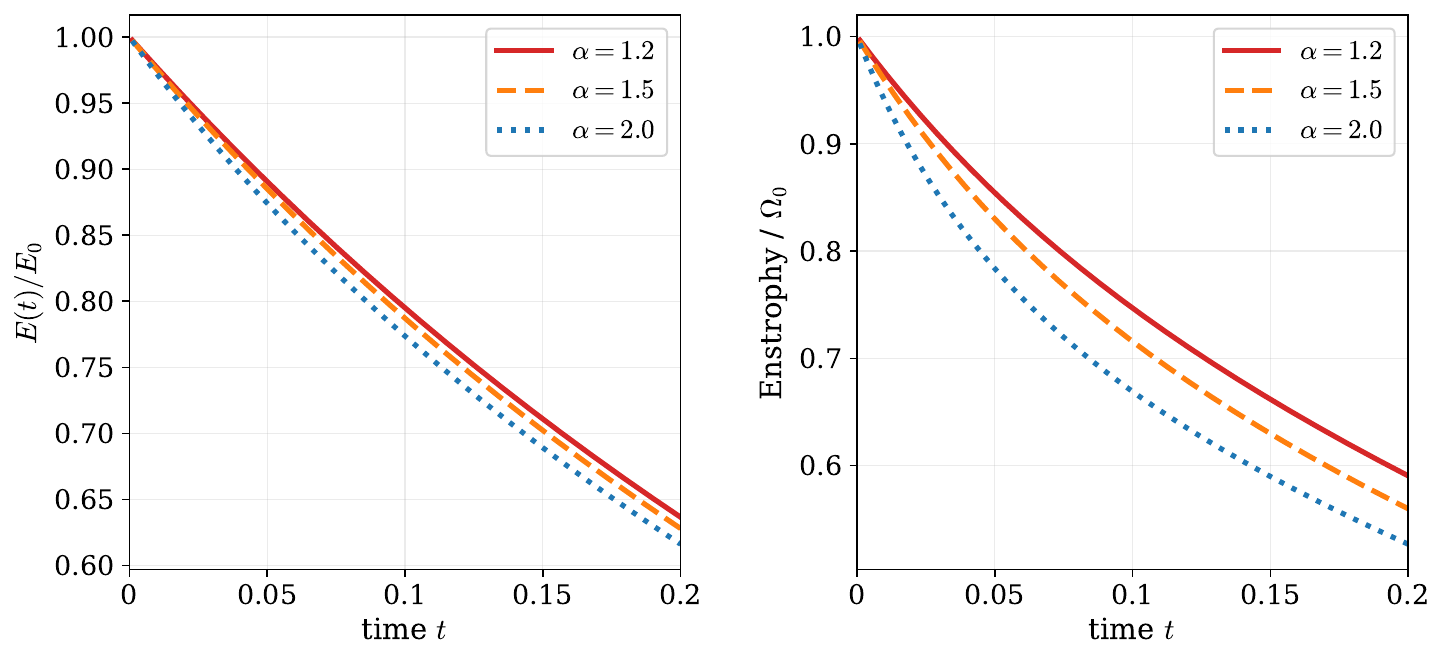}
  \caption{Effect of $\alpha\in\{1.2,\,1.5,\,2.0\}$ on the Kelvin--Helmholtz shear IC over $t\in[0,0.2]$ ($N=128$). \emph{Left}: normalized energy $E(t)/E_0$. \emph{Right}: normalized enstrophy $\Omega(t)/\Omega_0$; enstrophy is more sensitive to $\alpha$ than energy because it weights modes by $k^2$, amplifying the $|k|^\alpha$ difference at high wavenumbers. Solid, dashed, and dotted lines: $\alpha=1.2$, $1.5$, $2.0$.}
  \label{fig:alpha-comp}
\end{figure}

\begin{table}[ht]
\centering
\caption{Effect of $\alpha$ on the Kelvin--Helmholtz shear IC ($N=128$, $T=0.20$). Larger $\alpha$ accelerates both energy and enstrophy decay, with a stronger relative effect on enstrophy (which weights modes by $k^2$). Energies are normalized by the (fixed) initial value $E_0\approx0.77$; the divergence constraint is held near round-off throughout.}
\label{tab:alpha}
\smallskip
\begin{tabular}{cccc}
\toprule
$\alpha$ & $E(T)/E_0$ & $\Omega(T)/\Omega_0$ & $\max|\Div\bB_h|$ \\
\midrule
1.2 & 0.637 & 0.590 & $3.6\times10^{-14}$ \\
1.5 & 0.628 & 0.560 & $3.3\times10^{-14}$ \\
2.0 & 0.616 & 0.527 & $3.6\times10^{-14}$ \\
\bottomrule
\end{tabular}
\end{table}

\noindent {\bf Example 2.} Magnetic island merging ($\alpha=1.5$).

We choose the following initial condition,
\begin{equation}\label{eq:island-ic}
    \begin{aligned}
  B_x &= 2\bigl(\sin 2x\cos y + 0.5\sin x\cos 2y + 0.3\sin 3x\cos y\bigr),\\
  B_y &= -2\bigl(\cos 2x\sin y + 0.5\cos x\sin 2y + 0.3\cos 3x\sin y\bigr),\\
  B_z &= 2\bigl(\cos x\cos y + 0.4\cos 2x + 0.4\cos 2y + 0.2\cos 3x\cos y\bigr),
\end{aligned}
\end{equation}

followed by the spectral Hodge projection of the in-plane components. The in-plane field is a superposition of Fourier modes at wavenumbers $k=1,2,3$ that produce closed flux tubes (magnetic islands) of decreasing size; it is rendered divergence-free by the Hodge projection applied at initialization, which brings the raw energy ($\approx4.36$) down to the reported $E_0\approx 4.07$, with $\max|J_h|\approx 8.7$.  This configuration is the natural periodic analogue of the island coalescence problem studied in \cite{KnollChacon06,BiskampWelter80,FinnKaw77}.

\paragraph{Physical motivation and regime}
Magnetic island coalescence is a canonical driven-reconnection problem \cite{KnollChacon06}: the attraction between same-helicity islands drives current-sheet formation between them.  With $\alpha=1.5$ the Hall/dissipation ratio for the dominant $k=2$ modes is $|\bB||k|^2/|k|^\alpha\approx 2.8$, so the Hall term and the dissipation are in genuine competition, deforming the islands while smoothing fine scales.

\paragraph{Dynamics}
Figure~\ref{fig:island-snap} traces the evolution at $t\in\{0,\,0.10,\,0.25,\,0.40\}$. At $t=0$ the $J_h$ field shows regular thin current filaments at the island boundaries. By $t=0.10$ the Hall term has distorted the boundaries and generated fine-scale current concentrations; the smallest ($k=3$) islands begin merging with neighbors. By $t=0.25$ the $k=3$ islands have been substantially absorbed, and the current pattern is dominated by elongated filaments between formerly distinct islands, showing structures consistent with Hall-dominated reconnection. By $t=0.40$ the multi-island pattern has reorganized into two or three dominant flux-tube structures; the $J_h$ correlation between $t=0$ and $t=0.40$ is $0.51$, indicating significant structural change while some memory of the initial configuration is retained.

\begin{figure}[h]
  \centering
  \includegraphics[width=\textwidth]{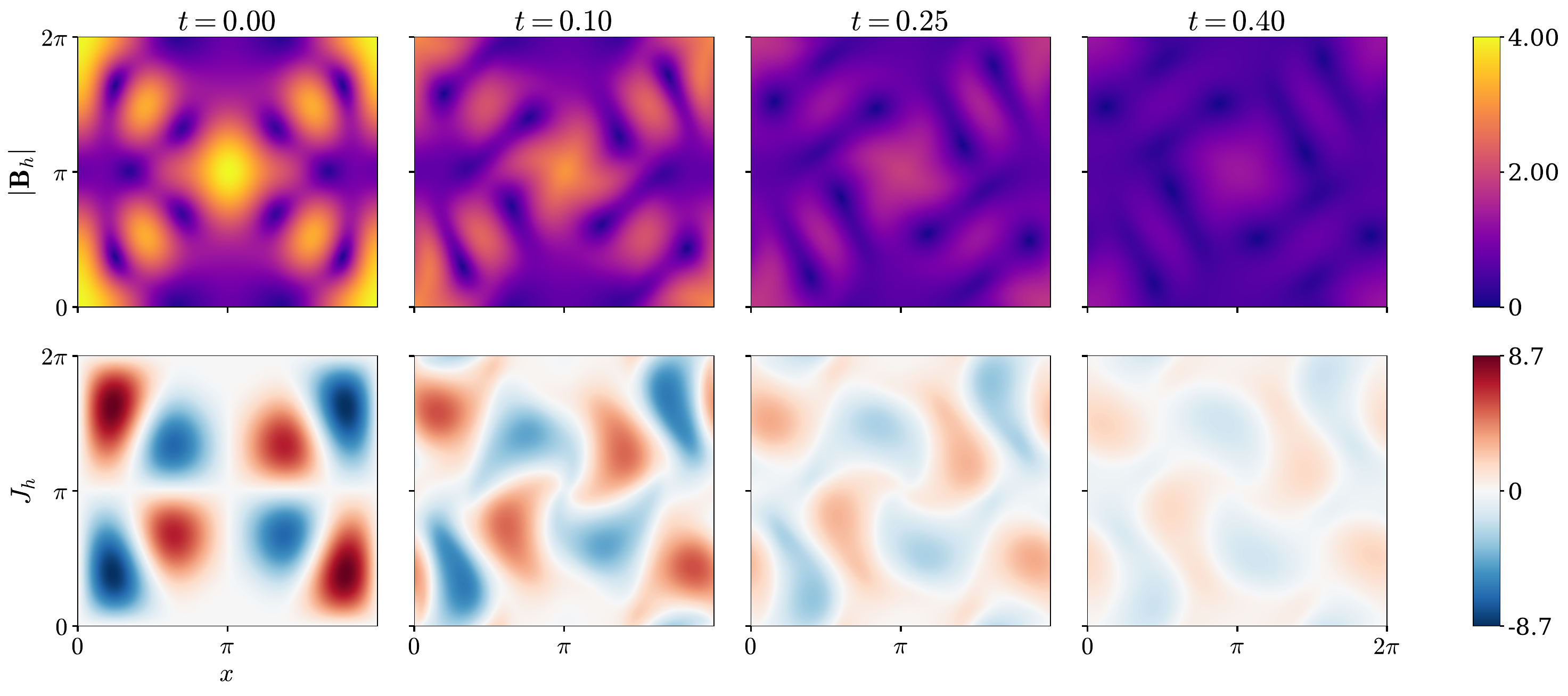}
  \caption{Magnetic island IC \eqref{eq:island-ic} ($\alpha=1.5$, $N=128$) at $t\in\{0,\,0.10,\,0.25,\,0.40\}$. \emph{Top row}: $|\bB_h|$ with per-panel colorbar; the multi-island structure progressively coarsens as Hall-driven merging and fractional dissipation reduce the island count. \emph{Bottom row}: $J_h$ with per-panel symmetric colorbar; the initially regular current filaments at island boundaries develop irregular fine-scale structure and partially merge. Spatial $J_h$ correlation falls from $1$ to $0.51$ over this window.}
  \label{fig:island-snap}
\end{figure}

\noindent {\bf Example 3.} Helical initial data and verification ($\alpha=2.0$).

We choose the following initial condition,
\begin{equation}\label{eq:helical-ic}
  \begin{aligned}
  B_x &= \sin x\cos y + 0.5\cos(2x)\sin y,\\
  B_y &= -\cos x\sin y + 0.5\sin x\cos(2y),\\
  B_z &= \sin x + \sin y.
  \end{aligned}
\end{equation}
After the divergence-free projection, the initial energy of \eqref{eq:helical-ic} is $E_0\approx1.53$. Under $\Lambda^2=-\Delta$,     the Fourier modes with $|k|^2=1$, $2$, and $5$ decay respectively as $e^{-t}$, $e^{-2t}$, and $e^{-5t}$, and account for approximately   $65.6\%$, $32.8\%$, and $1.6\%$ of the total energy. These percentages are the post-projection energy split: the $|k|^2=5$ in-plane pair in \eqref{eq:helical-ic} is not divergence-free, so the Hodge projection removes most of its energy and lowers $E_0$ from its raw value $\approx 1.63$ to $\approx 1.53$. Since $E(t) = \|\bB\|_{L^2}^2$ is quadratic, the linearized energy follows
\begin{equation}\label{eq:helical-exact}
  \frac{E(t)}{E_0} = 0.656\,e^{-2t} + 0.328\,e^{-4t} + 0.016\,e^{-10t} + \mathcal{O}(\|\bB\|^2).
\end{equation}

\paragraph{Verification}
With $\alpha=2$ the integrating factor is $e^{-|k|^2\tau}$, exact with no approximation.  Any deviation from \eqref{eq:helical-exact} measures the cumulative effect of the Hall nonlinearity. The Hall/dissipation ratio for Group~B modes is $|\bB||k|^2/|k|^\alpha = 1\cdot 2/2 = 1$: Hall and dissipation are comparable, making this a mildly nonlinear verification test rather than a turbulence simulation.

Table~\ref{tab:helical-verify} compares $E(t)/E_0$ against \eqref{eq:helical-exact} at six output times. The error is $0.60\%$ at $t=0.25$, growing to $2.88\%$ at $t=1.0$ and then saturating near $3\%$ for $t\geq 1.0$.  The plateau indicates that the nonlinear correction is accumulated during the active Hall phase $t\in[0,1]$ and then freezes as the field becomes too small for the Hall term to matter. A $3\%$ departure from the linearized formula at $T=1$ confirms that the Hall term is present and active but sub-dominant, as expected for moderate-amplitude smooth initial data.

\begin{table}[h]
\centering
\caption{Semi-analytic verification for the helical IC \eqref{eq:helical-ic} with $\alpha=2.0$, $N=128$, $\tau=0.001$.  The exact formula \eqref{eq:helical-exact} gives the linearized (Hall-free) energy. The percentage error measures the cumulative Hall nonlinearity effect and saturates near $3\%$, confirming Hall is active but sub-dominant.}
\label{tab:helical-verify}
\smallskip
\begin{tabular}{ccccc}
\toprule
$t$ & $E(t)/E_0$ (simulation) & $E(t)/E_0$ (exact) & error & $\max|J_h|$ \\
\midrule
0.25 & 0.51676 & 0.51986 & 0.60\% & 1.260 \\
0.50 & 0.28071 & 0.28583 & 1.79\% & 0.749 \\
0.75 & 0.15854 & 0.16271 & 2.56\% & 0.449 \\
1.00 & 0.09205 & 0.09479 & 2.88\% & 0.271 \\
1.50 & 0.03248 & 0.03347 & 2.97\% & 0.098 \\
2.00 & 0.01177 & 0.01213 & 2.91\% & 0.035 \\
\bottomrule
\end{tabular}
\end{table}

\paragraph{Energy and current histories}
Figure~\ref{fig:energy} compares all three examples over $t\in[0,0.6]$. The left panel plots absolute energy $E(t)$ on a log scale. The three examples have initial amplitudes $E_0\approx 0.77$ (KH shear), $4.07$ (magnetic islands), and $1.53$ (helical), and decay at rates determined jointly by $\alpha$ and spatial structure. The right panel plots $\max|J_h(t)|$, a sensitive indicator of current-sheet intensity.  The magnetic-island example carries the highest current density throughout, reflecting its larger field amplitude and the thin current filaments at the island boundaries; the helical example decays most smoothly, consistent with the linearized formula \eqref{eq:helical-exact}.

\begin{figure}[h]
  \centering
  \includegraphics[width=0.99\textwidth]{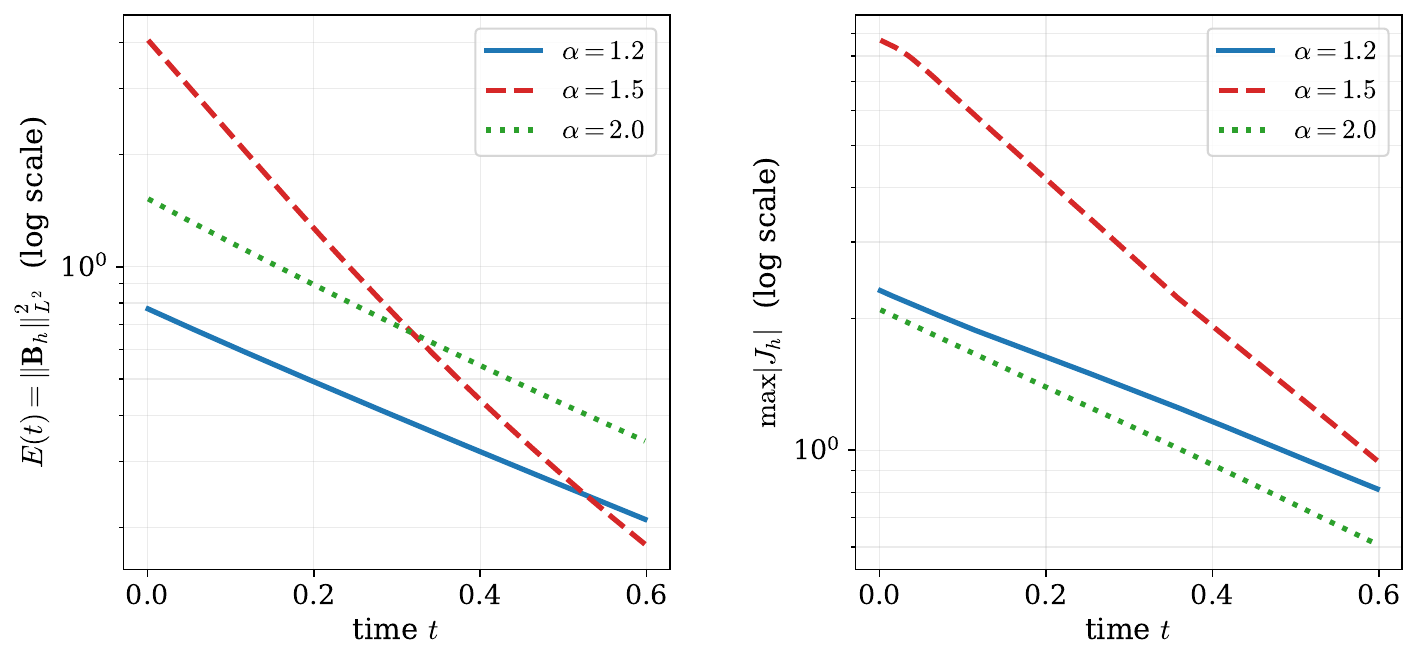}
  \caption{Comparison of all examples over $t\in[0,0.6]$ ($N=128$). Solid: KH shear ($\alpha=1.2$); dashed: magnetic islands ($\alpha=1.5$); dotted: helical ($\alpha=2.0$). \emph{Left}: absolute energy $E(t)$ on log scale, showing different initial amplitudes and $\alpha$-dependent decay rates. \emph{Right}: peak current $\max|J_h(t)|$ on log scale; the magnetic-island sustains the highest current intensity, while the helical follows the smooth linearized decay.}
  \label{fig:energy}
\end{figure}

\section*{Acknowledgments}
This work was partially supported by the ONR grant under \#N00014-24-1-2432, the Simons Foundation (MP-TSM-00002783) and the NSF grant DMS-2420988. During the preparation of this manuscript, the authors used ChatGPT for language polishing and to improve the clarity of the presentation. The authors reviewed and edited the generated content as needed and assume full responsibility for all content of the manuscript.

\appendix
\FloatBarrier
\section{Stability of a BDF2 variant}\label{app:bdf2}
The energy framework of Section~\ref{sec:stability} extends to a second-order-in-time variant based on the Backward Differentiation Formula of order 2 (BDF2).  The numerical experiments are based primarily on the integrating-factor Euler implementation; the results below provide mathematical support for future use of higher-order time stepping.

For problems where a larger time step is desirable, the energy framework extends naturally to the second-order Backward Differentiation Formula (BDF2) scheme.  Given $\bB_h^n$ and $\bB_h^{n-1}$, let $\widetilde{\bB}_h^n := 2\bB_h^n - \bB_h^{n-1}$ be the second-order extrapolation.  The BDF2 semi-implicit step is
\begin{equation}\label{eq:bdf2-scheme}
  \frac{3\bB_h^{n+1,*}-4\bB_h^n+\bB_h^{n-1}}{2\tau} +\Hh(\bB_h^{n+1,*};\widetilde{\bB}_h^n) +\Lambda_h^\alpha\bB_h^{n+1,*} +\sigma L_{b,h}\bB_h^{n+1,*} =0,
\end{equation}
followed by the Hodge projection $\bB_h^{n+1}=\Ph\bB_h^{n+1,*}$.

Define the BDF2 modified magnetic energy
\begin{equation}\label{eq:bdf2-energy}
  \mathcal{E}_h^n := \frac{1}{2}\Bigl( \normh{\bB_h^n}^2 +\normh{2\bB_h^n-\bB_h^{n-1}}^2 \Bigr).
\end{equation}
The factor $\normh{2\bB_h^n-\bB_h^{n-1}}^2$ is the standard BDF2 correction term that makes the modified energy equivalent to $\normh{\bB_h^n}^2$ up to a constant factor.

\begin{theorem}[BDF2 modified-energy stability]\label{thm:bdf2}
Under the assumptions of Theorem~\ref{thm:L2-stability}, if $\bB_h^n$ and $\bB_h^{n-1}$ are in the range of $\Ph$, then every step of \eqref{eq:bdf2-scheme} satisfies
\begin{align}\label{eq:bdf2-energy-identity}
  \mathcal{E}_h^{n+1,*} &+ \frac{1}{2}\normh{\bB_h^{n+1,*}-2\bB_h^n+\bB_h^{n-1}}^2 +2\tau\normh{\Lambda_h^{\alpha/2}\bB_h^{n+1,*}}^2 \notag\\
  &\quad+ 2\tau\sigma\sum_{i=x,y,z}b_h(B_{i,h}^{n+1,*},B_{i,h}^{n+1,*}) = \mathcal{E}_h^{n},
\end{align}
where $\mathcal{E}_h^{n+1,*}$ is defined with $\bB_h^{n+1,*}$ in place of $\bB_h^{n+1}$.  Consequently,
\begin{equation}\label{eq:bdf2-stability-final}
  \mathcal{E}_h^{n+1} +2\tau\normh{\Lambda_h^{\alpha/2}\bB_h^{n+1,*}}^2 +2\tau\sigma\sum_{i=x,y,z}b_h(B_{i,h}^{n+1,*},B_{i,h}^{n+1,*}) \le \mathcal{E}_h^{n}.
\end{equation}
\end{theorem}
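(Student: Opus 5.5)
The plan is to test the BDF2 step \eqref{eq:bdf2-scheme} against $\bB_h^{n+1,*}$, as in the proof of Theorem~\ref{thm:L2-stability}, and to replace the backward-Euler polarization identity by the standard BDF2 (G-stability) algebraic identity. Write $a=\bB_h^{n+1,*}$, $b=\bB_h^n$, $c=\bB_h^{n-1}$. First I multiply \eqref{eq:bdf2-scheme} by $2\tau$ and take the grid inner product with $a$. The time-difference term becomes $\ip{3a-4b+c}{a}$. I will then verify, by expanding both sides in terms of $\ip{a}{a},\ip{a}{b},\ip{a}{c},\ip{b}{b},\ip{b}{c},\ip{c}{c}$, the identity
\begin{equation*}
  2\ip{3a-4b+c}{a} = \normh{a}^2+\normh{2a-b}^2-\normh{b}^2-\normh{2b-c}^2+\normh{a-2b+c}^2 .
\end{equation*}
Dividing by $2$, this reads $\mathcal{E}_h^{n+1,*}-\mathcal{E}_h^n+\tfrac12\normh{a-2b+c}^2$. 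The verification is a routine coefficient check: the coefficient of $\normh{a}^2$ is $6$ on both sides, that of $\ip{a}{b}$ is $-8$, and so on.

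Next I handle the remaining terms. The Hall term $2\tau\ip{\Hh(a;\widetilde{\bB}_h^n)}{a}$ vanishes by Lemma~\ref{lem:disc-hall-cancel}, applied with $\bU_h=a$ and $\bV_h=\widetilde{\bB}_h^n$. The cancellation holds for an arbitrary frozen field, so the extrapolation causes no difficulty. The fractional term gives $2\tau\normh{\Lambda_h^{\alpha/2}a}^2$, because $\Lambda_h^\alpha$ is a non-negative Fourier multiplier. The penalty gives $2\tau\sigma\sum_i b_h(a_i,a_i)$ through the definition of $L_{b,h}$. Collecting these contributions yields \eqref{eq:bdf2-energy-identity}.

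To obtain \eqref{eq:bdf2-stability-final}, I need $\mathcal{E}_h^{n+1}\le\mathcal{E}_h^{n+1,*}$, where $\mathcal{E}_h^{n+1}$ involves $\Ph a$ and $b$. I expect this to be the only genuinely delicate step, because the modified energy contains the cross term $2\bB_h^{n+1}-\bB_h^n$, so contractivity of $\Ph$ alone does not immediately suffice. This is where the hypothesis that $\bB_h^n$ lies in the range of $\Ph$ enters. Under that hypothesis $\Ph b=b$, so
\begin{equation*}
  2\Ph a-b=\Ph(2a-b).
\end{equation*}
Applying \eqref{eq:projection-contract} to both $a$ and $2a-b$ then gives $\normh{\Ph a}^2+\normh{2\Ph a-b}^2\le\normh{a}^2+\normh{2a-b}^2$, that is, $\mathcal{E}_h^{n+1}\le\mathcal{E}_h^{n+1,*}$. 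Dropping the non-negative term $\tfrac12\normh{a-2b+c}^2$ then yields the stated inequality.

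Finally, I note that $\bB_h^{n+1}=\Ph a$ is in the range of $\Ph$, since $\Ph$ is a projection, and $\bB_h^n$ already is. Hence the hypothesis propagates, and the estimate applies at every subsequent step.
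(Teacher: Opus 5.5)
Your proposal is correct and follows essentially the same route as the paper: test the BDF2 step against $\bB_h^{n+1,*}$, use the identical BDF2 polarization identity, remove the Hall term with Lemma~\ref{lem:disc-hall-cancel} applied to the frozen field $\widetilde{\bB}_h^n$, and pass from $\mathcal{E}_h^{n+1,*}$ to $\mathcal{E}_h^{n+1}$ using $\Ph\bB_h^n=\bB_h^n$ together with the contractivity of $\Ph$. Your explicit remark that $2\Ph\bB_h^{n+1,*}-\bB_h^n=\Ph(2\bB_h^{n+1,*}-\bB_h^n)$ simply spells out the step the paper leaves implicit.
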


\begin{proof}
Take the inner product of \eqref{eq:bdf2-scheme} with $\bB_h^{n+1,*}$. The Hall term vanishes by Lemma~\ref{lem:disc-hall-cancel} (applied with the frozen field $\widetilde{\bB}_h^n$ in place of $\bV_h$).  The BDF2 time-difference identity
\begin{align*}
  2\ip{3a-4b+c}{a} = \normh{a}^2+\normh{2a-b}^2 -\normh{b}^2-\normh{2b-c}^2 +\normh{a-2b+c}^2
\end{align*}
with $a=\bB_h^{n+1,*}$, $b=\bB_h^n$, $c=\bB_h^{n-1}$, multiplied by $2\tau$, gives \eqref{eq:bdf2-energy-identity}. The projected estimate \eqref{eq:bdf2-stability-final} follows because $\Ph$ is orthogonal and $\bB_h^n$ is already in its range, so $\normh{\bB_h^{n+1}} \le \normh{\bB_h^{n+1,*}}$ and $\normh{2\bB_h^{n+1}-\bB_h^n} \le \normh{2\bB_h^{n+1,*}-\bB_h^n}$.
\end{proof}

\begin{remark}[Comparison of the two schemes]
Both the backward-Euler scheme \eqref{eq:stable-scheme} and the BDF2 scheme \eqref{eq:bdf2-scheme} are unconditionally stable in their respective energy norms, and both treat the Hall nonlinearity with a single frozen explicit factor, so both require the Hall CFL $\tau\lesssim h/\max|\bB_h|$ for accuracy rather than stability.  The backward-Euler scheme is first-order in time and simpler to start (no previous-step value required).  The BDF2 scheme carries a larger stability constant and requires a warm-start Euler step, but its modified-energy identity \eqref{eq:bdf2-energy-identity} retains the same unconditional character.  In the numerical experiments of Section~\ref{sec:numerics} we use the integrating-factor Euler implementation as the primary scheme; Table~\ref{tab:bdf2} records the modified-energy behavior of the BDF2 variant for completeness.
\end{remark}

\paragraph{Numerical verification}
Table~\ref{tab:bdf2} verifies Theorem~\ref{thm:bdf2} numerically. For each of the three examples and each $\alpha\in\{1.2,1.5,2.0\}$, we run $200$ BDF2 steps on an $N=32$ grid with $\tau=0.002$, and record the initial and final modified energy $\mathcal{E}_h^n$ defined in \eqref{eq:bdf2-energy}, together with the number of steps at which $\mathcal{E}_h^n$ increases (a violation of Theorem~\ref{thm:bdf2}).

In every case the modified energy is strictly monotone throughout the run, confirming the theoretical estimate \eqref{eq:bdf2-stability-final}. The ratio $\mathcal{E}_h^{200}/\mathcal{E}_h^0$ reflects physical energy dissipation: faster fractional damping (larger $\alpha$) leads to a smaller final-to-initial ratio.

\begin{table}[htbp]
\centering
\caption{BDF2 modified-energy $\mathcal{E}_h^n$ verification ($N=32$, $\tau=0.002$, $200$ steps). The column ``non-mono'' counts steps at which $\mathcal{E}_h^n$ increases; zero in every case confirms Theorem~\ref{thm:bdf2}. Ratios reflect physical dissipation; larger $\alpha$ dissipates more rapidly.}
\label{tab:bdf2}
\smallskip
\setlength{\tabcolsep}{6pt}
\begin{tabular}{llrrrr}
\toprule
Examples & $\alpha$ & $\mathcal{E}_h^0$ & $\mathcal{E}_h^{200}$
                  & $\mathcal{E}_h^{200}/\mathcal{E}_h^0$ & non-mono \\
\midrule
\multirow{3}{*}{KH shear}
 & 1.2 & 0.7672 & 0.3175 & 0.4139 & 0 \\
 & 1.5 & 0.7668 & 0.3131 & 0.4083 & 0 \\
 & 2.0 & 0.7659 & 0.3080 & 0.4021 & 0 \\
\midrule
\multirow{3}{*}{Magnetic islands}
 & 1.2 & 4.0172 & 0.6261 & 0.1558 & 0 \\
 & 1.5 & 4.0029 & 0.4464 & 0.1115 & 0 \\
 & 2.0 & 3.9701 & 0.2605 & 0.0656 & 0 \\
\midrule
\multirow{3}{*}{Helical}
 & 1.2 & 1.5141 & 0.5942 & 0.3924 & 0 \\
 & 1.5 & 1.5135 & 0.5737 & 0.3790 & 0 \\
 & 2.0 & 1.5123 & 0.5431 & 0.3591 & 0 \\
\bottomrule
\end{tabular}
\end{table}


\FloatBarrier

\bibliographystyle{siamplain}
\bibliography{references}
\end{document}

%% file: shared.tex
\usepackage{amsfonts}
\usepackage{amssymb}
\usepackage{graphicx}
\usepackage{epstopdf}
\usepackage{algorithmic}
 
\newsiamremark{remark}{Remark}
\newsiamremark{hypothesis}{Hypothesis}
\crefname{hypothesis}{Hypothesis}{Hypotheses}
\newsiamthm{claim}{Claim}
\newsiamremark{fact}{Fact}
\crefname{fact}{Fact}{Facts}

\usepackage[T1]{fontenc}
\usepackage[utf8]{inputenc}
\usepackage{mathtools}
\usepackage{bm}
\usepackage{booktabs}
\usepackage{multirow}
\usepackage{array}
\usepackage{xcolor}
\usepackage{graphicx}
\usepackage{stmaryrd}
\usepackage{caption}
\usepackage{subcaption}
\usepackage{enumitem}
\usepackage{tikz}
\usetikzlibrary{shapes.geometric,arrows.meta,positioning,calc}
\usepackage{mdframed}
\usepackage{parskip}
\usepackage[numbers,sort&compress]{natbib}
 
\newcommand{\bB}{\bm{B}}

\newcommand{\Gh}{G_h}
\newcommand{\Th}{\mathcal{T}_h}
\newcommand{\Vh}{V_h}

\newcommand{\VCR}{V_h^{\mathrm{CR}}}
\newcommand{\Mh}{M_h}
\newcommand{\Eh}{\mathcal{E}_h}

\newcommand{\Ke}{\mathcal{K}_e}
\newcommand{\me}{m_e}
\newcommand{\bfn}{\bm{n}}

\newcommand{\Lal}{\Lambda^\alpha}
\newcommand{\jump}[1]{\llbracket #1 \rrbracket}
\newcommand{\norm}[2]{\left\|#1\right\|_{#2}}

\newcommand{\curl}{\nabla\times}
\newcommand{\Div}{\nabla\cdot}

\newcommand{\normh}[1]{\left\| #1 \right\|_{0,h}}

\newcommand{\Dx}{D_x}
\newcommand{\Dy}{D_y}

\newcommand{\ip}[2]{\left( #1,#2 \right)_h}

\graphicspath{{fig/}}

\headers{A gradient recovery method for EMHD}{H. Guo, R. Hu, Q. Peng, and X. Yang}

\title{A Gradient Recovery Method for Electron Magnetohydrodynamics with Fractional Dissipation}
\author{Hailong Guo\thanks{  Chongqing Research Institute of Big Data, Peking University, Chongqing, 401332, China  and  School of Mathematics and Statistics, The University of Melbourne, Parkville, VIC 3010, Australia.
  \texttt{hailong.guo@cqbdri.pku.edu.cn, hailong.guo@unimelb.edu.au} }
\and
Ruimeng Hu\thanks{Department of Mathematics, Department of Statistics and Applied Probability,
  University of California, Santa Barbara, CA 93106, USA.
  \texttt{rhu@ucsb.edu}}
\and
Qirui Peng\thanks{Department of Mathematics,
  University of California, Santa Barbara, CA 93106, USA.
  \texttt{qpeng9@ucsb.edu}}
\and
Xu Yang\thanks{Department of Mathematics,
  University of California, Santa Barbara, CA 93106, USA.
  \texttt{xy6@ucsb.edu}}}

\usepackage{amsopn}


%% file: references.bib
@article{DaiBabaei25,
  author    = {Dai, Mimi and Babaei, Hassan},
  title     = {Well-posedness of the electron {MHD} with partial resistivity},
  journal   = { arXiv:2503.18149},
  year      = {2025},
}

@article{DaiGlobal23,
  author    = {Dai, Mimi},
  title     = {Global existence of {2D} electron {MHD} near a steady state},
  journal   = { arXiv:2306.13036},
  year      = {2023},
}

@article{DaiIllposed24,
  author    = {Dai, Mimi},
  title     = {Ill-posedness of {$2\tfrac{1}{2}$D} electron {MHD}},
  journal   = { arXiv:2411.00120},
  year      = {2024},
}

@article{DaiBlowup25,
  author    = {Dai, Mimi},
  title     = {Blowup for the forced electron {MHD}},
  journal   = { arXiv:2503.14777},
  year      = {2025},
}

@article{DaiWu23,
  author    = {Dai, Mimi and Wu, Cheng},
  title     = {Dissipation wavenumber and regularity for electron magnetohydrodynamics},
  journal   = {Journal of Differential Equations},
  volume    = {376},
  pages     = {655--681},
  year      = {2023},
}

@article{JeongOh22,
  author    = {Jeong, In-Jee and Oh, Sung-Jin},
  title     = {On the {Cauchy} problem for the {Hall} and electron magnetohydrodynamic equations without resistivity {I}: {I}llposedness near degenerate stationary solutions},
  journal   = {Annals of PDE},
  volume    = {8},
  number    = {15},
  year      = {2022},
}

@article{JeongOh24a,
  author    = {Jeong, In-Jee and Oh, Sung-Jin},
  title     = {On illposedness of the {Hall} and electron magnetohydrodynamic equations without resistivity on the whole space},
  journal   = { arXiv:2404.13790},
  year      = {2024},
}

@article{JeongOh24b,
  title={Wellposedness of the Electron MHD Without Resistivity for Large Perturbations of the Uniform Magnetic Field},
  author={Jeong, In-Jee and Oh, Sung-Jin},
  journal={Annals of PDE},
  volume={11},
  number={1},
  pages={14},
  year={2025}
}

@article{HuPengYang26,
  author    = {Hu, Ruimeng and Peng, Qirui and Yang, Xu},
  title     = {The three-dimensional stochastic {EMHD} system: {L}ocal well-posedness and maximal pathwise solutions},
  journal   = { arXiv:2604.07497},
  year      = {2026},
}

@article{ChaeWanWu15,
  author    = {Chae, Dongho and Wan, Renhui and Wu, Jiahong},
  title     = {Local well-posedness for the {Hall--MHD} equations with fractional magnetic diffusion},
  journal   = {Journal of Mathematical Fluid Mechanics},
  volume    = {17},
  number    = {4},
  pages     = {627--638},
  year      = {2015}
}

@article{CGZ25,
  author    = {Chu, Tianhao and Guo, Hailong and Zhang, Zhimin},
  title     = {Recovery based linear finite element methods for {Hamilton--Jacobi--Bellman} equation with {Cordes} coefficients},
  journal   = {SIAM Journal on Numerical Analysis},
  volume    = {63},
  number    = {1},
  pages     = {23--53},
  year      = {2025},
}

@article{ChaconSimakovZocco07,
  author    = {Chac\'{o}n, L. and Simakov, Andrei N. and Zocco, A.},
  title     = {Steady-state properties of driven magnetic reconnection in {2D} electron magnetohydrodynamics},
  journal   = {Physical Review Letters},
  volume    = {99},
  pages     = {235001},
  year      = {2007},
}

@article{KnollChacon06,
  author    = {Knoll, D. A. and Chac\'{o}n, L.},
  title     = {Coalescence of magnetic islands in the low-resistivity, {Hall--MHD} regime},
  journal   = {Physical Review Letters},
  volume    = {96},
  pages     = {135001},
  year      = {2006},
}

@article{Kingsep90,
  title={Electron magnetohydrodynamics},
  author={Gordeev, Aleksandr V. and Kingsep, Aleksandr S. and Rudakov, Leonid I.},
  journal={Physics Reports},
  volume={243},
  number={5},
  pages={215--315},
  year={1994},
  publisher={Elsevier}
}

@article{Biskamp00,
  title={Magnetic reconnection in plasmas},
  author={Biskamp, Dieter},
  journal={Astrophysics and Space Science},
  volume={242},
  number={1},
  pages={165--207},
  year={1996}
}

@article{ShayDrake99,
  author    = {Shay, M. A. and Drake, J. F. and Rogers, B. N. and Denton, R. E.},
  title     = {The scaling of collisionless, magnetic reconnection for large systems},
  journal   = {Geophysical Research Letters},
  volume    = {26},
  pages     = {2163--2166},
  year      = {1999},
}

@article{RogersDenton01,
  author    = {Rogers, B. N. and Denton, R. E. and Drake, J. F. and Shay, M. A.},
  title     = {Role of dispersive waves in collisionless magnetic reconnection},
  journal   = {Physical Review Letters},
  volume    = {87},
  pages     = {195004},
  year      = {2001},
}

@article{DaughtonsScudder06,
  author    = {Daughton, W. and Scudder, J. and Karimabadi, H.},
  title     = {Fully kinetic simulations of undriven magnetic reconnection with open boundary conditions},
  journal   = {Physics of Plasmas},
  volume    = {13},
  pages     = {072101},
  year      = {2006},
}

@techreport{FinnKaw77,
  title={Coalescence instability of magnetic islands},
  author={Finn, J Michael and Kaw, PK},
  year={1976},
  institution={Princeton Univ., NJ (USA). Plasma Physics Lab.}
}

@article{BiskampWelter80,
  author    = {Biskamp, D. and Welter, H.},
  title     = {Coalescence of magnetic islands},
  journal   = {Physical Review Letters},
  volume    = {44},
  pages     = {1069--1072},
  year      = {1980},
}

@article{GuoYangInterfaceI,
  author = {Guo, Hailong and Yang, Xu},
  title = {Gradient recovery for elliptic interface problem: {I}. body-fitted mesh},
  journal = {Commun. Comput. Phys.},
  volume = {23}, 
  pages = {1488--1511}, 
  year = {2018}
  }

@article{GuoYangInterfaceII,
  title={Gradient recovery for elliptic interface problem: {II}. Immersed finite element methods},
  author={Guo, Hailong and Yang, Xu},
  journal={Journal of Computational Physics},
  volume={338},
  pages={606--619},
  year={2017}
}

@article{GuoYangInterfaceIII,
  title={Gradient recovery for elliptic interface problem: {III}. {N}itsche's method},
  author={Guo, Hailong and Yang, Xu},
  journal={Journal of Computational Physics},
  volume={356},
  pages={46--63},
  year={2018},
  publisher={Elsevier}
}

@article{GuoYangZhu19,
  title={Bloch theory-based gradient recovery method for computing topological edge modes in photonic graphene},
  author={Guo, Hailong and Yang, Xu and Zhu, Yi},
  journal={Journal of Computational Physics},
  volume={379},
  pages={403--420},
  year={2019},
  publisher={Elsevier}
}

@article{ZienkiewiczZhu92,
  title={The superconvergent patch recovery and a posteriori error estimates. {P}art 1: {T}he recovery technique},
  author={Zienkiewicz, Olgierd Cecil and Zhu, Jian Zhong},
  journal={International Journal for Numerical Methods in Engineering},
  volume={33},
  number={7},
  pages={1331--1364},
  year={1992},
  publisher={Wiley Online Library}
}

@article{ZhangNaga05,
  title={A new finite element gradient recovery method: superconvergence property},
  author={Zhang, Zhimin and Naga, Ahmed},
  journal={SIAM J. Sci. Comput.},
  volume={26},
  pages={1192--1213},
  year={2005},
  publisher={SIAM}
}

@article{NagaZhang04,
  title={A posteriori error estimates based on the polynomial preserving recovery},
  author={Naga, Ahmed and Zhang, Zhimin},
  journal={SIAM J. Numer. Anal.},
  volume={42},
  pages={1780--1800},
  year={2004},
  publisher={SIAM}
}

@article{GuoZhangZou18,
  title={A {$C^0$} linear finite element method for biharmonic problems},
  author={Guo, Hailong and Zhang, Zhimin and Zou, Qingsong},
  journal={Journal of Scientific Computing},
  volume={74},
  number={3},
  pages={1397--1422},
  year={2018},
  publisher={Springer}
}

@article{GuoYang17,
  title={Polynomial preserving recovery for high frequency wave propagation},
  author={Guo, Hailong and Yang, Xu},
  journal={Journal of Scientific Computing},
  volume={71},
  number={2},
  pages={594--614},
  year={2017},
  publisher={Springer}
}

@article{TangWangYuanZhou20,
  title={Rational spectral methods for {PDE}s involving fractional {L}aplacian in unbounded domains},
  author={Tang, Tao and Wang, Li-Lian and Yuan, Huifang and Zhou, Tao},
  journal={SIAM J. Sci. Comput.},
  volume={42},
  number={2},
  pages={A585--A611},
  year={2020},
  publisher={SIAM}
}

@article{Rossmanith06,
  title={An unstaggered, high-resolution constrained transport method for magnetohydrodynamic flows},
  author={Rossmanith, James A},
  journal={SIAM J. Sci. Comput.},
  volume={28},
  number={5},
  pages={1766--1797},
  year={2006},
  publisher={SIAM}
}

@article{hu2025well,
  title={Well-posedness of the relaxed Electron MHD equations with random diffusion},
  author={Hu, Ruimeng and Peng, Qirui and Yang, Xu},
  journal={arXiv:2509.18640},
  year={2025}
}
